\numberwithin{equation}{section}
\subjclass[2010]{35P30, 35R11, 35K55, 35B40}
\keywords{Non-local operators, Non-linear parabolic problems, Asymptotic behavior of solutions}
\title{Large time behavior of fractional porous media equation}
\author[G. Franzina]{Giovanni Franzina}
\address[G. Franzina]{Istituto per le Applicazioni del Calcolo ``M. Picone''
\newline\indent
Consiglio Nazionale delle Ricerche
\newline\indent 
Via dei Taurini, 19, 00185 Roma, Italy}
\email{giovanni.franzina@cnr.it}
\author[B. Volzone]{Bruno Volzone}
\address[B. Volzone]{ Dipartimento di Scienze Economiche, Giuridiche, Informatiche e Motorie - DiSEGIM,
\newline\indent
Universit\`a degli Studi di Napoli ``Parthenope''
\newline\indent
Via Guglielmo Pepe Rione Gescal - 80035 Nola (NA), Italy
}
\email{bruno.volzone@uniparthenope.it}
\newcommand{\inter}[1]{\ensuremath{\frac{(#1(x)-#1(y))^2}{|x-y|^{N+2s}}}}
\newcommand{\interr}[2]{\ensuremath{\frac{(#1(x)-#1(y))(#2(x)-#2(y))}{|x-y|^{N+2s}}}}
\newcommand{\sob}{{\mathcal D}^{s,2}_0(\Omega)}
\font\script=rsfs10 at 12pt
\def\F{{\mbox{\script F}\,\,}}
\newcommand{\lyap}[1]{\ensuremath{\F_{\!\!\!\!q,\alpha}^{\!s}}\!\left(#1\right)}
\newcommand{\lyapm}[2]{\ensuremath{\F_{\!\!\!\!#1,\alpha}^{\!s}}\!\left(#2\right)}
\newcommand{\lyapno}{{\F_{\!\!\!\!q,\alpha}^{\!s}}}
\newcommand{\RN}{\mathbb R^N}
\newcommand{\R}{\mathbb R}
\numberwithin{equation}{section}
\newtheorem{theorem}{Theorem}[section]
\newtheorem{proposition}{Proposition}[section]
\newtheorem{lemma}{Lemma}[section]
\newtheorem{corollary}{Corollary}[section]
\theoremstyle{definition}
\newtheorem{definition}{Definition}[section]
\newtheorem{remark}{Remark}[section]
\newtheorem*{ack}{Acknowledgments}
\begin{document}

\begin{abstract}
Following the methodology of \cite{bravol}, we study the long-time behavior for the signed Fractional Porous Medium Equation in open bounded sets with smooth boundary. Homogeneous exterior Dirichlet boundary conditions are considered. We prove that if the initial datum has sufficiently small energy, then the solution, once suitably rescaled, converges to a nontrivial constant sign solution of a sublinear fractional Lane-Emden equation.
\par
Furthermore, we give a nonlocal sufficient energetic criterion on the initial datum, which is important to identify the exact limit profile, namely the positive solution or the negative one.
\end{abstract}
\maketitle

\tableofcontents

\section{Introduction}
In this paper, we will achieve some stabilization results for solutions to an initial boundary value problem for the {\it Fractional Porous Medium Equation} (FPME for short), of the form
\begin{equation}\label{FPMEintro}
\left\{\begin{array}{rcll}
\partial_{t}u&=&-(-\Delta)^{s} (|u|^{m-1}u), & \mbox{ in } Q:=\Omega\times (0,+\infty),\\
u&=&0, & \mbox{ in } \R^{N}\setminus\Omega\times(0,+\infty),\\
u(\cdot,0)&=&u_0,& \mbox{ in } \Omega.
\end{array}
\right.
\end{equation}
Here we consider the porous medium regime, \emph{i.e.} $m>1$, we assume $0<s<1$, and $\Omega$ is a bounded open set of $\mathbb R^N$.
 A broad theory has been developed for this problem under several aspects (existence, uniqueness, regularity etc.), see for instance \cite{BFRO,BFV,BSV,BV,BV2}. The main result of this paper concerns 
solutions emanating from initial data $u_0$ for which the energy functional
\[
\frac12\int_{\RN}\int_{\RN} \inter{\varphi}\,dx\,dy-\frac{m+1}{(m-1)m}
\int_\Omega |\varphi|^{\frac{m+1}{m}}\,dx
\]
does not exceed its first excited level when the choice $\varphi= |u_0|^{m-1}u_0$ is made. By following the methodology of~\cite{bravol}, in which
the local case was considered, we compute the large time asymptotic profile of such solutions in this non-local framework. As in the local case, sign-changing initial data are included in the analysis: irrespective of their sign, if their energy is small enough then they give rise to solutions that in the large time limit are asymptotic to functions with a spatial profile arising in the energy minimization.
\medskip

For a precise statement, we need to introduce, for all $q\in (1,2)$ and $\alpha\in(0,+\infty)$, the functional
defined on $\sob$ (see Sect.~\ref{sec2} for the precise definition of this space) by
\begin{equation}
\label{functional}
\ensuremath{\F_{\!\!\!\!q,\alpha}^{\!s}}\!\left(\varphi \right) = 
\frac12\int_{\RN}\int_{\RN} \inter{\varphi}\,dx\,dy-\frac{\alpha}{q}
\int_\Omega |\varphi|^{q}\,dx\,,
\end{equation}
whose critical points, 
by definition, are the weak solutions of the Lane-Emden equation
\begin{equation}
\label{ELLintro}
	(-\Delta)^s\varphi = \alpha\ |\varphi|^{q-2}\varphi\,,\qquad\text{in $\Omega$,}
\end{equation}
with homogeneous Dirichlet boundary conditions. It is known~\cite{fralic} that the minimal energy 
\begin{equation}
\label{minimizer}
\Lambda_1=\min\{\lyap\varphi\mathbin{\colon}\varphi\in \sob\}\,,
\end{equation}
is achieved by a solution with constant sign, that it is unique (up to the sign). Also, we set
\[
	\Phi(u)= |u|^{m-1}u
\]
and we observe that $\Phi^{-1}(\varphi) = |\varphi|^{q-2}\varphi$ where $q=(m+1)/m$. Now, following \cite{bravol}, we define the \emph{second critical energy level}, or {\em first excited level}, as it follows
\[
\Lambda_{2}=\inf\Big\{\Lambda>\Lambda_1 \mathbin{\colon}\text{$\Lambda$ is a critical value of $\lyapno$}\Big\}.
\]
The the solution to problem  \eqref{FPMEintro} has the following stabilization property.
\begin{theorem}\label{mainthm1}
Let $m>1$, $0<s<1$, and let $\Omega$ be a bounded open set in $\mathbb R^N$ with $C^{1,1}$ boundary. Given
$u_0\in L^{m+1}(\Omega)$, with $\Phi(u_0)\in \sob$ and 
\[
\ensuremath{\F_{\!\!\!\!q,\alpha}^{\!s}}\!\left(\Phi(u_0)\right)
	<\Lambda_{2} \,,\ \text{where $q=\frac{m+1}{m}$ and $\alpha=\frac{1}{m-1}$,}
\]
let $u$ be the weak solution of the fractional porous media equation \eqref{FPMEintro} with initial datum $u_0$. Then,
\[
	\lim_{t\to\infty} \| t^\alpha u(\cdot,t)-|W|^{q-2}W\|_{L^{m+1}(\Omega)}=0\,,
\]
where $W\in\{w_\Omega,-w_\Omega\}$ and $w_\Omega$ is the positive minimiser of $\lyapno$
on $\sob$.
\end{theorem}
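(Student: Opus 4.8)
The plan is to recast \eqref{FPMEintro} as an autonomous flow for which $\lyapno$ is a strict Lyapunov functional, and then to run a LaSalle-type argument whose outcome is forced by the definition of $\Lambda_2$ and by the uniqueness of the minimiser of $\lyapno$. Concretely, set $\tau=\log(1+t)$, $v(x,\tau)=(1+t)^{\alpha}u(x,t)$ with $\alpha=\tfrac1{m-1}$ (the exponent that balances the homogeneities of \eqref{FPMEintro}) and $\varphi:=\Phi(v)=(1+t)^{m\alpha}\Phi(u(\cdot,t))$. Then $\varphi$ solves, with $q=\tfrac{m+1}{m}$ and $\Phi^{-1}(\varphi)=|\varphi|^{q-2}\varphi$, the autonomous problem
\[
\partial_\tau\bigl(\Phi^{-1}(\varphi)\bigr)=\alpha\,\Phi^{-1}(\varphi)-(-\Delta)^s\varphi\ \text{ in }\Omega,\qquad \varphi=0\ \text{ in }\RN\setminus\Omega,\qquad \varphi(\cdot,0)=\Phi(u_0),
\]
whose steady states are precisely the weak solutions of \eqref{ELLintro}, i.e.\ the critical points of $\lyapno$. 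Since $r\mapsto|r|^{q-2}r$ is nondecreasing, differentiating $\lyapno$ along this evolution and using the equation yields the dissipation identity $\tfrac{d}{d\tau}\lyap{\varphi(\cdot,\tau)}=-\int_\Omega\partial_\tau\bigl(\Phi^{-1}(\varphi)\bigr)\,\partial_\tau\varphi\,dx\le0$, so $\lyapno$ is a strict Lyapunov functional along the flow. As $\lyap{\cdot}\ge\Lambda_1$ on $\sob$, the map $\tau\mapsto\lyap{\varphi(\cdot,\tau)}$ is nonincreasing and bounded below, hence converges to some $\ell$ with $\Lambda_1\le\ell\le\lyap{\varphi(\cdot,0)}=\lyap{\Phi(u_0)}<\Lambda_2$, and integrating the identity shows the dissipation is integrable on $(0,\infty)$.

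Next come the compactness estimates. The well-posedness and smoothing theory for the FPME on $C^{1,1}$ domains (the works quoted in the introduction) provide $\varphi(\cdot,\tau)\in\sob$ for every $\tau\ge0$, continuity of $\tau\mapsto\varphi(\cdot,\tau)$ into $L^q(\Omega)$, and the sharp decay $\|u(\cdot,t)\|_{L^\infty(\Omega)}\le C(1+t)^{-\alpha}$, i.e.\ $\sup_{\tau\ge0}\|v(\cdot,\tau)\|_{L^\infty(\Omega)}<\infty$. Since $q<2$ and $\Omega$ is bounded, the fractional Poincar\'e--Sobolev inequality combined with the energy bound gives $\sup_{\tau\ge0}\int_{\RN}\!\int_{\RN}\frac{(\varphi(x,\tau)-\varphi(y,\tau))^2}{|x-y|^{N+2s}}\,dx\,dy<\infty$, and the compact embedding $\sob\hookrightarrow L^q(\Omega)$ makes the orbit $\{\varphi(\cdot,\tau):\tau\ge0\}$ relatively compact in $L^q(\Omega)$; in particular its $\omega$-limit set $\omega_\infty$ is nonempty, compact, and --- being a nested intersection of closures of connected sets --- connected.

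The core is the analysis of $\omega_\infty$, carried out as in the local case \cite{bravol}. Using the integrability of the dissipation one produces times $\tau_n\to\infty$ along which $\varphi$ is asymptotically stationary; combining this with the uniform $\sob$-bound, the $L^\infty$-bound, the relative compactness, and a Palais--Smale-type argument (testing the nearly-stationary equations for $\varphi(\cdot,\tau_n)$ against $\varphi(\cdot,\tau_n)$ and using the strong $L^q$-convergence of the nonlinearity $|\varphi(\cdot,\tau_n)|^{q-2}\varphi(\cdot,\tau_n)$), one shows that every $\psi\in\omega_\infty$ is a weak solution of \eqref{ELLintro}, that the convergence $\varphi(\cdot,\tau_n)\to\psi$ is strong in $\sob$, and hence that $\lyap{\psi}=\ell$. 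Thus $\ell$ is a critical value of $\lyapno$ lying in $[\Lambda_1,\Lambda_2)$; by the very definition of $\Lambda_2$ there is no critical value strictly between $\Lambda_1$ and $\Lambda_2$, so $\ell=\Lambda_1$ and $\psi$ is a minimiser of $\lyapno$. Replacing $\psi$ by $|\psi|$ does not increase $\lyapno$ --- and strictly decreases its first term unless $\psi$ has a.e.\ constant sign --- so minimisers have constant sign, and by the uniqueness result~\cite{fralic} one gets $\omega_\infty\subseteq\{w_\Omega,-w_\Omega\}$. Since $\omega_\infty$ is connected it consists of a single point $W\in\{w_\Omega,-w_\Omega\}$, and relative compactness of the orbit then yields $\varphi(\cdot,\tau)\to W$ in $L^q(\Omega)$ as $\tau\to\infty$. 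Finally, $\Phi^{-1}$ is H\"older continuous of exponent $1/m$, so
\[
\bigl\|v(\cdot,\tau)-|W|^{q-2}W\bigr\|_{L^{m+1}(\Omega)}^{m+1}=\bigl\|\Phi^{-1}(\varphi(\cdot,\tau))-\Phi^{-1}(W)\bigr\|_{L^{m+1}(\Omega)}^{m+1}\le C\,\bigl\|\varphi(\cdot,\tau)-W\bigr\|_{L^q(\Omega)}^{q},
\]
which tends to $0$ as $\tau\to\infty$; since $t^{\alpha}u(\cdot,t)=\bigl(\tfrac{t}{1+t}\bigr)^{\alpha}v(\cdot,\tau)$ and $(\tfrac{t}{1+t})^{\alpha}\to1$ as $t\to\infty$, the assertion follows.

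I expect the main obstacle to be twofold. First, promoting the formal computation of the first paragraph to a bona fide dissipative dynamical system on $L^q(\Omega)$: the $\sob$-regularity and time-continuity of $\varphi$, the validity of the dissipation identity for a.e.\ $\tau$, and the energy inequality $\lyap{\varphi(\cdot,\tau)}\le\lyap{\Phi(u_0)}$ down to $\tau=0$ all have to be distilled from the FPME well-posedness and smoothing theory. Second --- and this is really the crux --- the Palais--Smale-type step: passing to the limit in the (time-integrated) rescaled equation along $\tau_n\to\infty$, identifying the $\omega$-limit points as solutions of \eqref{ELLintro}, and upgrading weak to strong $\sob$-convergence so that these profiles sit at energy level exactly $\ell$; the uniform $L^\infty$ bound on $v$ is what makes this go through. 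Once these are in place, the remainder --- forcing the level down to $\Lambda_1$ via the definition of $\Lambda_2$, the constant sign and uniqueness of the minimiser from~\cite{fralic}, the connectedness of $\omega_\infty$, and the H\"older passage back to $L^{m+1}(\Omega)$ --- is soft.
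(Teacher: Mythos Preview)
Your overall strategy matches the paper's: rescale to an autonomous flow, show $\lyapno$ is a Lyapunov functional, use compactness to get a nonempty connected $\omega$-limit, identify its elements as critical points with energy below $\Lambda_2$, and invoke the spectral gap plus uniqueness of minimisers. The differences lie in the technical implementation of two steps.

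First, the dissipation inequality. You rightly flag making the formal identity rigorous as the main obstacle. The paper handles this by constructing the solution via implicit Euler time-discretisation (Theorem~\ref{exist-ima}): each step is a minimisation problem, yielding a discrete energy inequality that survives the limit $h\to0$ as the entropy--entropy dissipation estimate \eqref{EED}. This route avoids any regularity theory for signed solutions.

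Second, the identification of $\omega$-limit points and their energy. You aim for strong $\sob$-convergence via a Palais--Smale argument, relying on a uniform $L^\infty$ bound on $v$ drawn from ``the works quoted in the introduction''. Two remarks: (i) those references (\cite{BFV,BSV,BFRO}) treat \emph{nonnegative} solutions, and the paper deliberately avoids invoking such bounds for signed data; (ii) more importantly, strong $\sob$-convergence and the exact equality $\lyap{\psi}=\ell$ are unnecessary. The paper (Theorem~\ref{omega-lim-char}) simply tests the weak formulation against $\eta(x,t)=\rho(t-t_j)\psi(x)$ and passes to the limit using only the $L^{m+1}$-convergence $v(\cdot,t_j)\to U$; then Fatou's lemma on the Gagliardo seminorm gives $\lyap{\Phi(U)}\le\liminf_j\lyap{\Phi(v(\cdot,t_j))}<\Lambda_2$, which already forces $\Phi(U)\in\{w_\Omega,-w_\Omega\}$ by Corollary~\ref{cor:FG}. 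You never need the energy to \emph{equal} $\ell$, only to lie below $\Lambda_2$ --- this is the cleaner route and sidesteps the $L^\infty$ issue entirely.
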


Besides on $\Omega$, the function $w_\Omega$ that
achieves the minimum in \eqref{minimizer} depends on $m$ (through $\alpha$ and $q$) and on $s$; we refer to the material in Sect.~\ref{sec:ell} for
its existence and uniqueness, that are however well known.
Recall that, in the case of nonnegative data $u_{0}$, it is also well known (see \cite{BFV,BSV}) that the solution $u$ stabilizes towards the so called $\emph{Friendly Giant}$, following the denomination due to Dahlberg and Kenig for the standard porous medium equation \cite{DK} (see also \cite[Sec.\ 5.9]{VaBook}), namely
\[
S(x,t)=t^{-\alpha}w_{\Omega}(x)^{q-1}.
\]
That is a separate variable solution taking $+\infty$ as initial value.
In particular, in~\cite{BFV,BSV} various interesting results are shown, related to the finer problem of the sharp convergence rate of the relative error, a question that was also faced in the classical paper \cite{AP}.
\medskip

As said,  
Theorem~\ref{mainthm1} can be proved via the approach used in~\cite{bravol} to deal with the local problem, thanks to a Lyapunov-type
property of the energy functional \eqref{functional}: namely, that
\[
	t\longmapsto \lyap{\Phi(v(\cdot,t))} \text{is non-increasing,}
\] 
whenever 
$v$ is an energy solution (see Sect.~\ref{sec:4} for precise definitions) of the initial boundary value problem for the rescaled equation
\begin{equation}
\label{FPMErescintro}
	\partial_t v + (-\Delta )^s\Phi(v) = \alpha v\,.
\end{equation}
This property is inferred, in this paper, from 
an entropy-entropy dissipation estimate in Sect.~\ref{sec:4}.
In order to prove it, we produce solutions by the classical Euler implicit time discretization scheme.
That has the advantage of providing a discrete version of the desired inequality in which we can pass to the limit. We prefer this approach to considering exact solutions to a uniformly parabolic approximation, as done in~\cite{bravol} for the local problem, mainly because
that would require $C^{1,\alpha}$ estimates for the non-local operators that are 
obtained by regularizing the {\em signed} FPME; incidentally, we mention that strong results of this type can be found in~\cite{BFRO} in the case of {\em non-negative} solutions.
\medskip

We recall here in brief the use of the the Lyapunov property for the proof of Theorem~\ref{mainthm1}. Given a solution $u$ of \eqref{FPMEintro},
the equation \eqref{FPMErescintro}  for the function $v(x,t)=e^{\alpha t} u(x,e^t-1)$ describes a system that evolves, irrespective of the starting conditions, to fixed points, {\em i.e.}, states of the form $\Phi^{-1}(v)$ with $v$ being a critical point of the energy functional. Because of the isolation of the energy minimizing solutions $\pm w_\Omega$,
proved in~\cite{fralic}, this and the Lyapunov property imply that
the disconnected set $\{\pm\Phi^{-1}(w_\Omega)\}$
has a non-empty basin of attraction, including all initial states with energy smaller than the first excited level.
(The isolation property implies some restriction on boundary regularity: assumptions weaker than those made in the our main statement are also feasible, but that is not the object of this paper.)

Eventually, by the compactness of the relevant Sobolev embedding,
by energy coercivity, and by the Lyapunov property, orbits are relatively compact; thus, the $\omega$-limit
is connected and
the only possible cluster point of the orbit emanating from an initial state $u_0$ 
below the energy threshold is either $\Phi^{-1}(w_\Omega)=w_\Omega^{q-1}$ or $\Phi^{-1}(-w_\Omega)=-w_\Omega^{q-1}$.
\medskip

Yet, meeting the threshold requirement in Theorem~\ref{mainthm1} implies no restriction on the sign that $u_0$ should take in $\Omega$,
nonetheless. The relevance in energy of the nodal sets, instead, enters in predicting which one of the two possible limit profiles the orbit will accumulate
to. The following Proposition, which is the non-local counterpart of an analogous result of~\cite{bravol},
quantifies this idea.

\begin{proposition}\label{prop:select}
In  the assumption of Theorem~\ref{mainthm1}, we have $W=w$ if either
\begin{subequations}\label{assumnonloc}
\begin{equation}
\label{assnl1}
\text{ $\lyap{\Phi(u_0^-)}>0$ and $\lyap{\Phi(u_0)}<\Lambda_2$}
\end{equation}
or  
\begin{equation}\label{assnl2}
\text{$\lyap{\Phi(u_0^-)}\le0$ and  $\displaystyle\lyap{\Phi(u_0^+)}+2\iint\frac{\Phi(u_0)^+(x)\Phi(u_0)^-(y)}{|x-y|^{N+2s}}\,dx\,dy<\Lambda_2$.}
\end{equation}
\end{subequations}
\end{proposition}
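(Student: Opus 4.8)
The plan is to reduce the sign selection to a connectedness statement in the open sublevel set $\{\lyapno<\Lambda_2\}\subset\sob$. By the Lyapunov property (Section~\ref{sec:4}), the orbit $\tau\mapsto\Phi(v(\cdot,\tau))$ associated with $u$, where $v(x,\tau)=e^{\alpha\tau}u(x,e^{\tau}-1)$, satisfies $\lyap{\Phi(v(\cdot,\tau))}\le\lyap{\Phi(u_0)}<\Lambda_2$, is relatively compact in $\sob$, and converges there to $W\in\{w_\Omega,-w_\Omega\}$; reparametrising $[0,+\infty)$ onto $[0,1)$ and adding $W$ at the endpoint, this is a continuous path in $\{\lyapno<\Lambda_2\}$ from $\Phi(u_0)$ to $W$. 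Conversely, the isolation of $\pm w_\Omega$ (\cite{fralic}), the absence of critical values of $\lyapno$ in $(\Lambda_1,\Lambda_2)$, and the Palais--Smale condition (valid since $\sob\hookrightarrow L^q(\Omega)$ is compact) show, via the usual deformation/mountain--pass argument, that any continuous path in $\sob$ joining $w_\Omega$ to $-w_\Omega$ attains a value $\ge\Lambda_2$; that is, $w_\Omega$ and $-w_\Omega$ lie in distinct connected components of $\{\lyapno<\Lambda_2\}$. Hence it suffices to join $\Phi(u_0)$ to $w_\Omega$ by a continuous path staying in $\{\lyapno<\Lambda_2\}$: then $W$, $\Phi(u_0)$ and $w_\Omega$ are in the same component, forcing $W=w_\Omega=w$.

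The starting point is the bookkeeping identity for the energy functional: since $\Phi(u_0^{\pm})=\Phi(u_0)^{\pm}\in\sob$ have disjoint supports,
\[
\lyap{\varphi}=\lyap{\varphi^{+}}+\lyap{\varphi^{-}}+2\iint\frac{\varphi^{+}(x)\,\varphi^{-}(y)}{|x-y|^{N+2s}}\,dx\,dy\qquad(\varphi\in\sob),
\]
together with the elementary fact that, because $q<2$, the map $\lambda\mapsto\lyap{\lambda\psi}$ vanishes at $0$ and is first non-increasing, then non-decreasing on $[0,+\infty)$, so that $\lyap{\lambda\psi}\le\max\{0,\lyap{\psi}\}$ for $\lambda\in[0,1]$. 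Writing $I\ge0$ for the cross term with $\varphi=\Phi(u_0)$, either of \eqref{assnl1} and \eqref{assnl2} yields $\lyap{\Phi(u_0^{+})}<\Lambda_2$ (subtract the non-negative quantities $\lyap{\Phi(u_0^{-})}$ and $2I$ from $\lyap{\Phi(u_0)}$ in the first case; this is the assumption in the second). Moreover $\varphi\equiv0$ is a critical point of $\lyapno$ with value $0>\Lambda_1$, so $\Lambda_2\le0=\lyap{0}$, and then $\lyap{\Phi(u_0^{+})}<\Lambda_2\le0$ forces $u_0^{+}\not\equiv0$.

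Now I would build the path in two pieces. First, the affine curve $\gamma_1(\tau)=\Phi(u_0^{+})-(1-\tau)\Phi(u_0^{-})$, $\tau\in[0,1]$, runs from $\Phi(u_0)$ to $\Phi(u_0^{+})$, and by the identity above (again using disjoint supports) and the scaling bound,
\[
\lyap{\gamma_1(\tau)}=\lyap{\Phi(u_0^{+})}+\lyap{(1-\tau)\Phi(u_0^{-})}+2(1-\tau)I\le\lyap{\Phi(u_0^{+})}+\max\{0,\lyap{\Phi(u_0^{-})}\}+2I<\Lambda_2 ,
\]
the last bound reducing to $\lyap{\Phi(u_0)}<\Lambda_2$ under \eqref{assnl1} and to $\lyap{\Phi(u_0^{+})}+2I<\Lambda_2$ under \eqref{assnl2}. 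Second, applying Theorem~\ref{mainthm1} to the admissible datum $u_0^{+}$ (here $\Phi(u_0^{+})\in\sob$ and $\lyap{\Phi(u_0^{+})}<\Lambda_2$), the rescaled orbit emanating from $\Phi(u_0^{+})$ stays in $\{\lyapno<\Lambda_2\}$ by the Lyapunov property and converges in $\sob$ to an element of $\{\pm w_\Omega\}$; the comparison principle keeps it non-negative, so the limit is $w_\Omega$. Reparametrising and adding the endpoint, this second piece is a continuous path from $\Phi(u_0^{+})$ to $w_\Omega$ in $\{\lyapno<\Lambda_2\}$; concatenated with $\gamma_1$ it joins $\Phi(u_0)$ to $w_\Omega$ inside $\{\lyapno<\Lambda_2\}$, and the reduction of the first paragraph gives $W=w$.

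The step I expect to carry the real content is the verification that $\gamma_1$ stays below $\Lambda_2$: this is precisely where the nonlocal energetic criterion enters, in particular the cross term $2I$ in \eqref{assnl2} and the splitting identity, even though each individual estimate is elementary. The remaining ingredients are not specific to this Proposition: the continuity and relative compactness in $\sob$ of the rescaled orbits is the machinery already developed for Theorem~\ref{mainthm1}, and the separation of $w_\Omega$ from $-w_\Omega$ in the sublevel set is the geometric fact recalled in the Introduction, resting on the isolation result of~\cite{fralic}.
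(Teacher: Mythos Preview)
Your overall architecture---build a path from $\Phi(u_0)$ to $w_\Omega$ inside the sublevel $\{\lyapno<\Lambda_2\}$ and invoke a mountain-pass separation of $\pm w_\Omega$---matches the paper's, and your treatment of the affine piece $\gamma_1$ is correct (indeed a bit cleaner than the paper's case analysis on $\tau_0$ in Proposition~\ref{prop:encontr}). There is, however, a genuine gap in the topology you use.

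You repeatedly claim that the rescaled orbit $\tau\mapsto\Phi(v(\cdot,\tau))$ is continuous with values in $\sob$, is relatively compact there, and converges in $\sob$ to $W$; likewise for the orbit emanating from $u_0^+$. None of this is established in the paper. What is available is $v\in C([0,\infty);L^{m+1}(\Omega))$ and $\Phi(v)\in L^\infty((0,\infty);\sob)$ (Theorem~\ref{exist-ima}), hence $\Phi(v)\in C([0,\infty);L^q(\Omega))$, and Theorem~\ref{mainthm1} gives convergence in $L^{m+1}$, not in $\sob$. Consequently your ``connected components of $\{\lyapno<\Lambda_2\}$ in $\sob$'' argument does not apply to the concatenated path as written. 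The paper confronts exactly this difficulty and resolves it by formulating the mountain-pass level $\Lambda^\ast$ over paths that are merely continuous in $L^0(\Omega)$ and bounded in $\sob$ (Proposition~\ref{prop:mszeta}), then proving $\Lambda^\ast\ge\Lambda_2$ (Proposition~\ref{prop:MPexcited}). The contradiction is then obtained with a path whose orbit portion is only $L^0$-continuous.

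There is also a genuine methodological difference in the second leg. To connect $\Phi(u_0^+)$ to $w_\Omega$ you run the parabolic flow from $u_0^+$ and appeal to a comparison principle for sign preservation; the paper instead uses the \emph{hidden convexity} curve $\gamma(\tau)=[(1-\tau)w^q+\tau(\Phi(u_0)^+)^q]^{1/q}$ (Proposition~\ref{prop:encontr}), which is continuous in $\sob$ and whose energy is controlled by convexity of $\tau\mapsto[\gamma(\tau)]_s^2$. Your flow-based alternative is a nice idea and would avoid importing the hidden convexity lemma, but to make it rigorous you must either (i) work throughout with $L^0$-continuous, $\sob$-bounded paths and invoke the paper's Proposition~\ref{prop:mszeta}/\ref{prop:MPexcited}, or (ii) prove strong $\sob$-continuity and $\sob$-convergence of the orbit, which the paper does not provide and which would require additional regularity.
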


We observe that Assumption\ \eqref{assnl2} is consistent with the analogous one
made in the local case in~\cite[Proposition 1.4]{bravol}. In that respect, note that
the double integral disappears in the limit
as $s\to1$,  if renormalized by a degenerating factor (we refer to Remark~\ref{lims1} below for more details).
\medskip

The proof of Proposition~\ref{prop:select} is by contradiction and makes use of a {\em hidden convexity} of Gagliardo's seminorm. This property, 
under the assumptions \eqref{assumnonloc}, allows one to ``prolong in the past'' the orbits $v(\cdot,t)$
of \eqref{FPMErescintro} that stabilize towards $-w_\Omega^{q-1}$ by a trajectory defined for negative times, connecting the initial datum $u_0$
to $w_\Omega^{q-1}$, with an energy control. It turns out, also in view again of the Lyapunov property, that this would contradict the mountain pass-type description 
\[
	\inf_{\gamma} \max_{\varphi\in {\rm Im}(\gamma)} \lyap{\varphi}
\]
of an excited level. In order to see this, in Sect.~\ref{sec:ell} we formulate this variational principle in a way that differs from standards in that the admissible $\gamma$, joining $w_\Omega$ and $-w_\Omega$, are only required to be continuous
with values in the class of real valued measurable functions on $\Omega$ endowed with the topology of the convergence in measure,
rather than the strong topology of Sobolev spaces.

\begin{ack}
B.\ Volzone was partially supported by {\scshape gnampa} of the Italian {\sc in}d{\sc am} (National Institute of High Mathematics).
Part of this material
is based upon work supported by the Swedish Research Council under grant no. 2016-06596 while G.\ Franzina was in residence at {\sc Institut Mittag-Leffler} in Djursholm, Sweden during the Research Program {\em “Geometric Aspects of Nonlinear Partial Differential Equations”} in 2022. 
We thank L.~Brasco for many interesting discussions contributing to improve the quality of the paper, and for suggesting the proof in appendix.
B.~Volzone wishes to thank M.~Bonforte and M.~Muratori for many fruitful discussions. 
\end{ack}

\section{Notations and Assumptions}\label{sec2}
Throughout this paper, 
we assume $\Omega$ to be an open bounded set, we take $s\in(0,1)$, and we let $m>1$. 
Then, we denote by $\sob$ be the completion of $C^\infty_0(\Omega)$
with respect to the norm
\begin{equation}
\label{seminorm}
	[u]_{s} = \left\{
		\int_{\mathbb R^N}\int_{\mathbb R^N}
	\inter{u}\,dx\,dy
	\right\}^\frac12\,.
\end{equation}
\begin{remark}
Since by assumption $\Omega$ supports a Poincar\'e inequality, $\sob$
is isomorphic to the completion
$\mathcal{X}^{s,2}_0(\Omega)$ with respect to the complete norm, often denoted also by ${\widetilde W}^{s,2}_0(\Omega)$ (see~\cite{BCV,BLP}). If in addition $\Omega$ has a Lipschitz boundary, then
both spaces are isometrically equivalent to the closure of $C^\infty_0(\Omega)$ in $W^{s,2}(\mathbb R^N)$,
consisting of all measurable functions vanishing a.e.\ out of $\Omega$ for which \eqref{seminorm} is finite.
Except if $s=\tfrac12$, in fact, $\sob$ coincides with the closure $W^{s,2}_0(\Omega)$
of $C^\infty_0(\Omega)$ in $W^{s,2}(\Omega)$.
 For more details on this functional-analytic setting, see~\cite[Chap.~3]{EE}. 
\end{remark}

For every $u\in C^2(\Omega)$, we take
\begin{equation}
\label{slap}
(-\Delta)^su(x) = \lim_{\varepsilon\to0^+} \int_{\R^N\setminus B_\varepsilon(x)} \frac{u(x)-u(y)}{|x-y|^{N+2s}}\,dy
\end{equation}
as the definition of the $s$-laplacian of $u$ at point $x$. As $s$ is fixed, we are not interested in multiplying the principal value integral by
any renormalization factor.
\medskip

By $L^0(\Omega)$, we shall denote
the space of all (equivalence classes of) real valued measurable functions on $\Omega$, endowed with the topology of the convergence in measure.
\medskip

We also set
\begin{equation}
\label{power}
	\Phi(v) = |v|^{m-1}v\,,\qquad \text{for all $v\in\R$.}
\end{equation}
For all $1<q<2$ let us define
\begin{equation}
\label{lambda1}
\lambda_1(\Omega,q,s) = \inf_{u\in\sob}\left\{ [u]_{s}^2 \mathbin{\colon}
	\int_\Omega |u|^q\,dx=1\right\}\,.
\end{equation}
Given $1<q<2$ and $\alpha>0$, we consider the functional
\begin{equation}
\label{lyap}
\lyap{\varphi} = \frac12\int_{\RN}\int_{\RN} \inter{\varphi}\,dx\,dy-\frac{\alpha}{q}
\int_\Omega |\varphi|^q\,dx\,,
\end{equation}
for all $\varphi\in\sob $.

\section{Elliptic Toolkit}\label{sec:ell}
The critical points of $\lyapno$ are the weak solutions $u\in \sob$ of the Lane-Emden type equation
\begin{equation}
\label{LEeq}
	(-\Delta)^su =\alpha |u|^{q-2}u\,,\qquad \text{in $\Omega$,}
\end{equation}
which means
\begin{equation}
\label{LEweak}
\int_{\RN}\int_{\RN} \interr{u}{\psi}\, dx\,dy
=\alpha\int_\Omega |u|^{q-2}u\psi\,dx\,,\qquad \text{for all
$\psi \in\sob$.}
\end{equation}

\begin{lemma}\label{lm:coerc}
Let $1<q<2$ and $\alpha>0$. Then,
the functional
 $\lyapno$ is coercive on $\sob$, {\em i.e.},
 \begin{equation}
  \label{coercivity}
 	\lyap{u}\ge \tfrac14 [u]_{s}^2 - C\,,
 \end{equation}
for all $u\in \sob$, where $C$ is a constant depending on $\Omega$,q,s, only.
\end{lemma}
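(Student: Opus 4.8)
The plan is to prove the coercivity estimate \eqref{coercivity} by interpolating the lower-order term $\frac{\alpha}{q}\int_\Omega|u|^q\,dx$ between the Gagliardo seminorm and a harmless constant, exploiting that $q<2$ strictly.

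First I would reduce the $L^q$ norm to the $L^2$ norm on $\Omega$. Since $\Omega$ is bounded and $1<q<2$, Hölder's inequality gives $\int_\Omega|u|^q\,dx\le |\Omega|^{1-q/2}\left(\int_\Omega|u|^2\,dx\right)^{q/2}$, so it suffices to control $\|u\|_{L^2(\Omega)}$. Next I would invoke the Poincaré-type inequality encoded in \eqref{lambda1} (with exponent $q=2$, which is admissible since $\Omega$ supports a Poincaré inequality, as noted in Sect.~\ref{sec2}), or equivalently the continuous embedding $\sob\hookrightarrow L^2(\Omega)$, to obtain $\|u\|_{L^2(\Omega)}\le C_\Omega\,[u]_s$. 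Combining, we get $\int_\Omega|u|^q\,dx\le C\,[u]_s^q$ for a constant $C=C(\Omega,q,s)$.

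The key step is then Young's inequality: since $0<q<2$, for every $\varepsilon>0$ there is $C_\varepsilon$ with $C\,[u]_s^q\le \varepsilon\,[u]_s^2+C_\varepsilon$. Choosing $\varepsilon$ so that $\frac{\alpha}{q}\varepsilon\le \frac14$ yields
\[
\lyap{u}=\frac12[u]_s^2-\frac{\alpha}{q}\int_\Omega|u|^q\,dx\ \ge\ \frac12[u]_s^2-\frac14[u]_s^2-C'\ =\ \frac14[u]_s^2-C',
\]
with $C'=C'(\Omega,q,s,\alpha)$, which is exactly \eqref{coercivity} (the constant also depends on $\alpha$, but $\alpha$ may be regarded as fixed). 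This completes the argument.

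I do not anticipate a genuine obstacle here; the only point requiring a little care is the legitimacy of the embedding $\sob\hookrightarrow L^2(\Omega)$, which relies on the standing assumption that $\Omega$ is bounded (hence supports the relevant fractional Poincaré inequality), as recalled in the Remark of Sect.~\ref{sec2}; alternatively one can use the fractional Sobolev embedding into $L^{2^*_s}(\Omega)$ when $N>2s$ and Hölder on the bounded set $\Omega$, or the compact embedding into $L^2$ when $N\le 2s$. The strict inequality $q<2$ is what makes the subcritical absorption via Young's inequality possible.
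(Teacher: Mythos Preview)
Your proof is correct and follows essentially the same idea as the paper's: combine a fractional Poincar\'e inequality with Young's inequality exploiting $q<2$. The paper streamlines your argument slightly by applying Young's inequality directly to $\frac{\alpha}{q}\|u\|_{L^q}^q$ and then invoking the definition of $\lambda_1(\Omega,q,s)$ in \eqref{lambda1} (i.e.\ the $L^q$-Poincar\'e inequality) rather than first passing through $L^2$ via H\"older, but the substance is the same; your remark that the constant actually depends on $\alpha$ as well applies equally to the paper's own constant.
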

\begin{proof}
Since $q<2$,  by Young's inequality we have
\begin{equation*}
	\lyap{u}\ge\tfrac12 [u]_{s}^2 - 
		\tfrac{\lambda_1(\Omega,q,s)}{4}
		 \left[ \int_\Omega |u|^q\,dx\right]^\frac{2}{q}
		- \tfrac{2-q}{2q} \alpha^\frac{2}{2-q} \left[ \frac{\lambda_1(\Omega,q,s)}{2}
		\right]^{-\frac{q}{2-q}}
\end{equation*}
and then using  \eqref{lambda1} gives \eqref{coercivity}.
\end{proof}

\subsection{The ground state level}
We collect some properties of the minimal energy level, defined as
\begin{equation}
\label{minL}
	\Lambda_1 = \inf_{\varphi\in\sob} \lyap{\varphi}\,.
\end{equation}

\begin{lemma}\label{lm:basic-elliptic}
Let $\alpha>0$ and $1<q<2$. Then,
\begin{itemize}
\item[$(i)$] the energy functional $\lyapno$ achieves the minimum in \eqref{minL};
\item[$(ii)$] the minimum $\Lambda_1$ of $\lyapno$ is a strictly negative number;
\item[$(iii)$] $\lyapno$ has exactly two minimisers $w$ and $-w$, where $w$ is a strictly
positive function;
\end{itemize}
\end{lemma}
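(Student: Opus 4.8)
The plan is to obtain all three items from the direct method of the calculus of variations combined with the constant-sign structure of minimizers and a known uniqueness result. First I would establish existence, item $(i)$. Take a minimizing sequence $\varphi_n\in\sob$ for $\lyapno$. By the coercivity estimate \eqref{coercivity} of Lemma~\ref{lm:coerc}, the sequence is bounded in $\sob$; since $\sob$ is a Hilbert space, up to a subsequence $\varphi_n\rightharpoonup \varphi$ weakly in $\sob$. The Gagliardo seminorm $[\,\cdot\,]_s^2$ is weakly lower semicontinuous (it is the square of a Hilbert norm), and because $\Omega$ is bounded and $q<2<2^*_s$, the embedding $\sob\hookrightarrow L^q(\Omega)$ is compact, so $\int_\Omega|\varphi_n|^q\,dx\to\int_\Omega|\varphi|^q\,dx$. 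Hence $\lyap{\varphi}\le\liminf_n\lyap{\varphi_n}=\Lambda_1$, so $\varphi$ is a minimizer and $\Lambda_1$ is attained (and finite, again by coercivity).

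Next, item $(ii)$: strict negativity of $\Lambda_1$. Pick any $\varphi_0\in\sob$ with $\int_\Omega|\varphi_0|^q\,dx>0$ and evaluate the functional along the ray $t\varphi_0$, $t>0$:
\[
\lyap{t\varphi_0}=\frac{t^2}{2}[\varphi_0]_s^2-\frac{\alpha t^q}{q}\int_\Omega|\varphi_0|^q\,dx.
\]
Since $q<2$, the negative term dominates as $t\to0^+$, so $\lyap{t\varphi_0}<0$ for $t$ small enough; therefore $\Lambda_1<0$. In particular, any minimizer $\varphi$ is nontrivial, $\varphi\not\equiv0$.

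For item $(iii)$, I would first argue that minimizers have constant sign. If $\varphi$ minimizes, so does $|\varphi|$: indeed $\int_\Omega||\varphi||^q\,dx=\int_\Omega|\varphi|^q\,dx$, while the fractional Gagliardo seminorm does not increase under taking absolute values, since $\bigl||\varphi|(x)-|\varphi|(y)\bigr|\le|\varphi(x)-\varphi(y)|$ pointwise; hence $\lyap{|\varphi|}\le\lyap{\varphi}=\Lambda_1$, forcing equality. Thus $|\varphi|$ is a nonnegative minimizer, hence a nonnegative weak solution of \eqref{LEeq}; by the strong maximum principle for $(-\Delta)^s$ on $\sob$ it is strictly positive in $\Omega$. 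Moreover the seminorm inequality is strict unless $\varphi$ and $|\varphi|$ agree up to sign, which shows every minimizer has constant sign, i.e. equals $w$ or $-w$ for some fixed strictly positive minimizer $w$. Uniqueness of this $w$ is exactly the statement recalled from~\cite{fralic} (uniqueness up to sign of the minimizer of $\lyapno$ on $\sob$), which I would invoke here; alternatively one can cite the standard convexity-in-the-$\Phi$-variable (``hidden convexity'') argument for sublinear problems. The main obstacle is this last uniqueness point: semicontinuity, coercivity, the sign reduction and the maximum principle are routine, so the substance of $(iii)$ is the uniqueness of the positive minimizer, which is genuinely nontrivial and is the reason the paper leans on the cited result rather than reproving it.
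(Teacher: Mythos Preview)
Your proposal is correct and follows essentially the same route as the paper: coercivity plus compact embedding for $(i)$, the scaling argument along a ray for $(ii)$, the reduction $\varphi\mapsto|\varphi|$ together with the strong maximum principle and the uniqueness result from~\cite{fralic} for $(iii)$.

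The only noteworthy difference is in how you conclude that minimizers have constant sign. You invoke the \emph{strictness} of the inequality $[\,|\varphi|\,]_s^2\le[\varphi]_s^2$ for sign-changing $\varphi$ (which is valid, since $(|a|-|b|)^2<(a-b)^2$ precisely when $ab<0$, and a sign-changing $\varphi$ makes the set $\{\varphi(x)\varphi(y)<0\}$ of positive measure). The paper instead sums the weak formulations for $u$ and $|u|$ to see that the positive part $u^+$ is a nonnegative supersolution of \eqref{LEeq}, and then applies the strong maximum principle to $u^+$, obtaining the dichotomy $u^+\equiv0$ or $u^+>0$. Both arguments are short and standard; yours is perhaps slightly more direct, while the paper's has the advantage of making explicit the supersolution structure that is sometimes useful elsewhere. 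For the uniqueness of the positive minimizer both you and the paper rely on~\cite{fralic}; note that the paper invokes the slightly stronger statement that nonnegative weak \emph{solutions} (not just minimizers) of \eqref{LEeq} are unique.
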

\begin{proof}

By \eqref{coercivity}, assertion $(i)$ follows by the compactness of the embedding of $\sob$ into $L^q(\Omega)$.

Then, given any nonzero $\varphi\in\sob$, in view of \eqref{lyap} we have $\lyap{t\varphi}<0$
for $t$ small enough, which implies $(ii)$.

As for $(iii)$, we argue as in \cite[Proposition 2.3]{brafra-20} and we let $u$ be a minimiser. Then $|u|$ is also a minimiser, because
$\lyap{|u|}\le\lyap{u}$ by
the elementary inequality $(|a|-|b|)^2\le(a-b)^2$ with $a=u(x)$,
$b=u(y)$, and thence
\[
\int_{\RN}\int_{\RN} \interr{|u|}{\psi}\, dx\,dy
=\alpha\int_\Omega |u|^{q-2}u\psi\,dx\,,\qquad \text{for all
$\psi \in\sob$.}
\]
Summing the latter to \eqref{LEweak} gives that the positive part $(u+|u|)/2$
of $u$ is a non-negative weak supersolution of \eqref{LEeq}. Then,
it must be either identically zero or strictly positive by the strong maximum principle for
the fractional Laplacian, see {\em e.g.}~\cite[Lemma 6]{serval} or~\cite[Proposition~7.1]{fralic}.
As a consequence, minimisers are
non-negative solutions of \eqref{LEeq}. By~\cite[Proposition 3.4]{fralic} (see also Remark~4.1 therein), non-negative weak solutions of \eqref{LEeq} are unique, and then we deduce $(iii)$.
\end{proof}

\subsection{Higher energies}
We then prove some basic properties of higher energy levels.
\begin{lemma}
Let $1<q<2$ and $\alpha>0$. Then
\label{basic-ell-2}
\begin{itemize}
\item[$(i)$] $\lyapno$ satisfies the Palais-Smale condition;
\item[$(ii)$] $\lyapno$ has the mountain pass structure;
\item[$(iii)$] the critical levels form a compact subset of $[\Lambda_1,0]$.
\end{itemize}
\end{lemma}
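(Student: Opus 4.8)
The plan is to prove the three assertions of Lemma~\ref{basic-ell-2} by adapting the standard variational machinery for subcritical functionals to this nonlocal sublinear setting, exploiting the coercivity already established in Lemma~\ref{lm:coerc} and the compactness of the embedding $\sob\hookrightarrow L^q(\Omega)$ for $1<q<2$.

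\medskip

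\textbf{Palais--Smale condition $(i)$.} Let $(u_n)\subset\sob$ be a Palais--Smale sequence, so that $\lyap{u_n}\to c$ for some $c\in\R$ and $(\lyapno)'(u_n)\to0$ in the dual of $\sob$. From the coercivity estimate \eqref{coercivity}, $\lyap{u_n}\to c$ forces $\tfrac14[u_n]_s^2\le c+C+o(1)$, hence $(u_n)$ is bounded in $\sob$. By reflexivity we extract a weakly convergent subsequence $u_n\rightharpoonup u$ in $\sob$, and by the compact embedding $\sob\hookrightarrow L^q(\Omega)$ we may also assume $u_n\to u$ strongly in $L^q(\Omega)$ and a.e.\ in $\Omega$. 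Testing the relation $(\lyapno)'(u_n)[u_n-u]\to0$, which unwinds to
\[
\int_{\RN}\int_{\RN}\interr{u_n}{(u_n-u)}\,dx\,dy-\alpha\int_\Omega|u_n|^{q-2}u_n(u_n-u)\,dx\longrightarrow0,
\]
one handles the second term by the $L^q$ convergence and the uniform boundedness of $|u_n|^{q-1}$ in $L^{q'}(\Omega)$ (here $q'=q/(q-1)$), so it tends to $0$; hence the Gagliardo bilinear form applied to $u_n$ and $u_n-u$ tends to $0$. Combining this with the weak convergence $u_n\rightharpoonup u$, which gives that the same bilinear form applied to $u$ and $u_n-u$ tends to $0$, yields $[u_n-u]_s^2\to0$, i.e.\ strong convergence in $\sob$. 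The only point requiring a little care is the continuity of the lower-order term, which is where the strict subcriticality $q<2$ (and hence $q<2^*_s$, with room to spare) is used.

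\medskip

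\textbf{Mountain pass geometry $(ii)$.} By Lemma~\ref{lm:basic-elliptic}, $\Lambda_1<0$ is attained at $\pm w$, so the functional is not bounded below away from a neighbourhood of the origin in energy; but near $0$ the sublinear term is the subordinate one: for $\|u\|$ small in $\sob$, by the Sobolev/Poincaré inequality $\int_\Omega|u|^q\,dx\le C[u]_s^q$ with $q>1$, so $\lyap{u}\ge\tfrac12[u]_s^2-\tfrac{\alpha C}{q}[u]_s^q$, and since $q<2$ this is \emph{not} positive for small radii — instead one has $\lyap{u}\to0$ as $u\to0$ while $\lyap{\pm w}=\Lambda_1<0$. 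Thus the appropriate mountain-pass configuration here is the one connecting the two minima $w$ and $-w$ through the origin: any path $\gamma\in C([0,1];\sob)$ with $\gamma(0)=w$, $\gamma(1)=-w$ must pass through configurations of positive energy (for instance because the only critical points with nonpositive critical values at level $\Lambda_1$ are $\pm w$ themselves, while $\lyap0=0>\Lambda_1$), so that
\[
\inf_{\gamma}\max_{t\in[0,1]}\lyap{\gamma(t)}\ge 0>\Lambda_1=\max\{\lyap w,\lyap{-w}\}.
\]
This is precisely the linking inequality needed to invoke the mountain pass theorem (in the two-minima form), and I would phrase it so as to anticipate the variant with $L^0(\Omega)$-continuous paths alluded to in the introduction.

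\medskip

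\textbf{Compactness of the critical set $(iii)$.} Every critical value lies in $[\Lambda_1,0]$: the lower bound is the definition of $\Lambda_1$, and the upper bound follows because if $u$ is a nonzero critical point then testing \eqref{LEweak} with $\psi=u$ gives $[u]_s^2=\alpha\int_\Omega|u|^q\,dx$, whence $\lyap{u}=(\tfrac12-\tfrac1q)[u]_s^2\le0$ since $q<2$ (and $u=0$ gives the value $0$). Boundedness of the set of critical values is therefore immediate; closedness follows from $(i)$: if $c_n\to c$ with each $c_n$ a critical value realised by $u_n$ (so $\lyap{u_n}=c_n$ and $(\lyapno)'(u_n)=0$), then $(u_n)$ is a Palais--Smale sequence at level $c$, hence up to a subsequence $u_n\to u$ strongly, and by continuity of $\lyapno$ and its derivative $u$ is a critical point with $\lyap u=c$, so $c$ is a critical value. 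Thus the set of critical levels is a closed bounded, hence compact, subset of $[\Lambda_1,0]$.

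\medskip

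The main obstacle I anticipate is not in any single one of these steps individually — each is classical once coercivity and compact embedding are in hand — but in getting the \emph{geometry} in $(ii)$ stated in the form that will actually be used later: because the nonlinearity is sublinear ($q<2$) rather than superlinear, the origin is \emph{not} a strict local minimum in the usual mountain-pass sense, and one must instead set up the pass between the two ground states $\pm w$ and argue that the barrier separating them sits at a \emph{nonnegative} level. Verifying that the mountain pass value genuinely exceeds $\max\{\lyap{w},\lyap{-w}\}=\Lambda_1$ (so that a nontrivial critical value is produced, giving content to $\Lambda_2$) is the delicate point, and it is cleanest to deduce it from the fact that $0$ is a critical value strictly above $\Lambda_1$, combined with the classification of minimisers in Lemma~\ref{lm:basic-elliptic}$(iii)$.
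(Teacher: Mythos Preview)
Your arguments for $(i)$ and $(iii)$ are correct and essentially match the paper's; in $(i)$ you test the differential against $u_n-u$ whereas the paper tests against $u_n$ and compares norms, but these are interchangeable standard routes.

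The gap is in $(ii)$. What the paper means by ``mountain pass structure'' is the separation inequality
\[
\inf\Big\{\lyap{\varphi}\;:\;\min\{\|\varphi-w\|_{\sob},\|\varphi+w\|_{\sob}\}\ge \|w\|_{\sob}\Big\}>\Lambda_1,
\]
and your task is only to show that the linking level exceeds $\Lambda_1$, not that it is nonnegative. Your claimed justification---that ``the only critical points \dots\ at level $\Lambda_1$ are $\pm w$'' together with $\lyap{0}=0$---does not establish that every continuous path from $w$ to $-w$ must meet a point of nonnegative energy; non-critical points along a path are unconstrained, and the path need not pass through $0$. In fact the straight segment $t\mapsto (1-2t)w$ shows $\max_t\lyap{\gamma(t)}=0$, but nothing in your argument excludes other paths staying strictly below $0$ (and in general the mountain pass level can indeed lie in $(\Lambda_1,0)$).

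The paper's proof is the clean one: argue by contradiction. If the infimum over the separating set equals $\Lambda_1$, pick $\varphi_j$ there with $\lyap{\varphi_j}\to\Lambda_1$; this is a minimising sequence, and by coercivity (Lemma~\ref{lm:coerc}) plus compact embedding it has a subsequence converging strongly in $\sob$ to a minimiser, hence to $w$ or $-w$ by Lemma~\ref{lm:basic-elliptic}$(iii)$, contradicting $\min\{\|\varphi_j-w\|,\|\varphi_j+w\|\}\ge\|w\|$. You hint at exactly this ingredient (``classification of minimisers'') in your closing paragraph, but the argument you actually wrote out in the body of $(ii)$ is a non sequitur; replace it with the compactness-of-minimising-sequences argument.
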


\begin{proof}
The first statement is the compactness
in $\sob$ of every Palais-Smale sequence. Then, we
let $(u_n)_{n\in\mathbb N}$ be one. Without loss of generality, that amounts to assuming
that
\begin{subequations}\label{PS}
\begin{equation}
\label{PS1}
\sup_{n\in\mathbb N} \lyap{u_n}<+\infty
\end{equation}
and that, for all $\psi\in\sob$ with $\|\psi\|_{\sob}=1$, we have
\begin{equation}
\label{PS2}
	\left\vert\int_{\RN}\int_{\RN} \interr{u_n}{\psi} 
	- \alpha\int_\Omega |u_n|^{q-2}u_n\psi\,dx \right\vert \le \frac1n\,.
\end{equation}
\end{subequations}
 By
Lemma~\ref{lm:coerc}, Eq.\ \eqref{PS1} implies that
$(u_n)_{n\in\mathbb N}$
is bounded in $\sob$. Thus, thanks to the compactness of the embedding into $L^q(\Omega)$, 
a subsequence of $(u_n)_{n\in\mathbb N}$ (that we do not relabel) converges to some limit $u$ weakly in $\sob$ and
strongly in $L^q(\Omega)$. By passing to the limit in \eqref{PS2} we see\footnote{
Here, in particular, we are using that
\[
	\lim_{n\to\infty}\int_\Omega |u_n|^{q-2}u_n\psi\,dx=\int_\Omega |u|^{q-2}u\psi\,dx\,.
\] 
This can be deduced from the fact that $\||u_n|^{q-2}u_n-|u|^{q-2}u\|_{L^{q/(q-1)}(\Omega)}\to0$, as $n\to\infty$, and in turn this
latter assertion holds because of the convergence to $u$ of the sequence $(u_n)_n$ in $L^q(\Omega)$. To see this, for
$q<2$ one can use the H\"older continuity of the function $\tau\mapsto|\tau|^{q-2}\tau$. 
} that
$u$ is a weak solution of \eqref{LEeq}. Then, we can choose $\psi=u$
in \eqref{LEweak}, which gives
\begin{equation}
\label{nehari}
	\lyap{u} = \left(\frac{1}{2}-\frac{1}{q}\right) [u]_{s}^2
	= \alpha\left(\frac{1}{2}-\frac{1}{q}\right) \int_\Omega|u|^{q}\,dx\,.
\end{equation}
Then,
\[
[u]_{s}^2 = \alpha\int_\Omega |u|^q\,dx
=\lim_{n\to\infty}\alpha \int_\Omega |u_n|^q\,dx\ge \limsup_{n\to\infty}\left([u_n]_{s}^2-\tfrac1n[u_n]_{s}\right)
=\limsup_{n\to\infty}\,[u_n]_{s}^2\,.
\]
Here, we used \eqref{nehari} for the first equality, 
the convergence in $L^q(\Omega)$ for the second one,
Eq.\ \eqref{PS2} for the inequality, and 
Lemma~\ref{lm:coerc} together with \eqref{PS1} for the last equality.
Hence, by the sequential weak lower semicontinuity of $[\,{\cdot}\,]_s^2$
the convergence of the sequence is also strong in $\sob$.
\medskip

We have proved statement $(i)$ and we consider now $(ii)$, which means 
\[
	\inf\left\{ \lyap{\varphi} \mathbin{\colon} \min\big\{\|\varphi-w\|_{\sob}\mathbin{,}\|\varphi+w\|_{\sob}\big\}\ge
	\|w\|_{\sob} \right\}>\Lambda_1\,,
\]
where $w$ is the positive function with energy $\Lambda_1$.
The contrapositive statement is that we have $\lyap{\varphi_j}\to \Lambda_1$ along a sequence
for which is at a distance both to $w$ and to $-w$ larger than the half of that between $w$ and $-w$,
in contradiction with the strong convergence  in $\sob$ either to $w$ or to $-w$, that follows by coercivity (see Lemma~\ref{lm:coerc}).
\medskip

So, $(ii)$ is true and we are left to prove $(iii)$. To do so, we observe that if 
$u\in\sob$ is a critical point of $\lyapno$ then choosing $\psi=u$ in \eqref{LEweak}
yields \eqref{nehari}.
Thus, all energy levels belong to a bounded set, contained in
 $]\Lambda_1,0]$. To prove that that their collection is closed, we write
 \eqref{nehari} with $u$ replaced by $u_n$, an arbitrary sequence of critical points with energy accumulating to a limit value $\Lambda$. Then, by
 the compactness of the embedding of $\sob$
 into $L^q(\Omega)$, 
 the limit $u$ 
 of the sequence in $\sob$ satisfies Eq. \eqref{LEeq}, and
so $\lyap{u}=\Lambda$ by construction.
\end{proof}

\subsection{Mountain pass level}
In the following, we recall how to construct a mountain pass energy level by considering paths of bounded energy
that are continuous with
respect to the topology of the convergence in measure.

\begin{proposition}
\label{prop:mszeta}Set
\[
	\mathfrak{Z} = \left\{z \in C([0,+\infty);L^0(\Omega))\cap L^\infty([0,+\infty);\sob )
	\mathbin{\colon}
	\text{$z(0)=w$ and $\lim_{t\to+\infty}z(t)=-w$ in $L^0(\Omega)$}
	\right\}
\]
Then, for all $1<q<2$ and $\alpha>0$,
\[
	\Lambda^\ast =
	 \inf_{z\in\mathfrak{Z}}\sup_{t\in[0,+\infty)} 
	\lyap{z(\cdot,t)}
\]
is a critical value of $\lyapno$.
\end{proposition}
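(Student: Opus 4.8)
The plan is to show that $\Lambda^\ast$ is attained as a critical value by combining the mountain pass structure and the Palais--Smale condition established in Lemma~\ref{basic-ell-2} with a regularization argument that transfers a minimax over measure-continuous paths to one over Sobolev-continuous paths. First I would verify that $\mathfrak{Z}$ is nonempty and that $\Lambda^\ast$ is a well-defined real number: the straight segment $z(t) = \tfrac{1-e^{-t}}{2}(-w) + \tfrac{1+e^{-t}}{2}w$ (or any continuous path joining $w$ to $-w$ in $\sob$, reparametrized on $[0,+\infty)$) lies in $\mathfrak{Z}$, so the infimum is over a nonempty set; coercivity (Lemma~\ref{lm:coerc}) gives $\lyapno \ge -C$ everywhere, so $\Lambda^\ast \ge -C > -\infty$, while evaluating on the segment gives a finite upper bound. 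I would also record that $\Lambda^\ast > \Lambda_1$: by the mountain pass structure, item $(ii)$ of Lemma~\ref{basic-ell-2}, any admissible $z$ must cross the sphere $\{\|\varphi \mp w\|_{\sob} = \|w\|_{\sob}\}$ (here one needs that an $L^0$-continuous path connecting $w$ to $-w$ which is bounded in $\sob$ must actually meet that sphere — this follows because on bounded subsets of $\sob$ the $L^0$ topology is metrizable and, by the compact embedding into $L^q$, $L^0$-convergence of a bounded sequence forces $L^q$-convergence, and the sphere separates $w$ from $-w$ in a way detectable after passing to a weakly convergent subsequence), whence $\sup_t \lyap{z(t)} \ge \inf_{\text{sphere}} \lyapno > \Lambda_1$.

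Next, the heart of the argument: I would prove that $\Lambda^\ast$ coincides with the \emph{standard} mountain pass value
\[
	\Lambda^{\ast\ast} = \inf_{\gamma \in \Gamma} \max_{\varphi \in \mathrm{Im}(\gamma)} \lyap{\varphi}, \qquad
	\Gamma = \left\{ \gamma \in C([0,1];\sob) \mathbin{\colon} \gamma(0) = w,\ \gamma(1) = -w \right\},
\]
and then invoke the classical mountain pass theorem of Ambrosetti--Rabinowitz (applicable by items $(i)$ and $(ii)$ of Lemma~\ref{basic-ell-2}) to conclude that $\Lambda^{\ast\ast}$ is a critical value of $\lyapno$. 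The inequality $\Lambda^\ast \le \Lambda^{\ast\ast}$ is the easy direction: any Sobolev-continuous path $\gamma$ on $[0,1]$ can be reparametrized to a map on $[0,+\infty)$, e.g.\ $z(t) = \gamma(1 - e^{-t})$ for $t < \infty$ and $z(\infty) = -w$, which is continuous into $\sob$ hence into $L^0(\Omega)$, bounded in $\sob$, and has the same supremum of energy; thus $\Gamma$ embeds into $\mathfrak{Z}$ with unchanged minimax, giving $\Lambda^\ast \le \Lambda^{\ast\ast}$. The reverse inequality $\Lambda^{\ast\ast} \le \Lambda^\ast$ is the delicate one and is where I expect the main obstacle to lie: given $z \in \mathfrak{Z}$ with $\sup_t \lyap{z(t)} < \Lambda^\ast + \varepsilon$, I must manufacture from it a genuinely $\sob$-continuous path with essentially the same energy ceiling.

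For that regularization step, the natural device is to mollify the trajectory \emph{in time}: replace $z$ by $z_\delta(t) = \int_0^\infty \rho_\delta(t - \tau) z(\tau)\,d\tau$ (a Bochner integral of the $\sob$-valued, weak-$\ast$ measurable, bounded map $z$, after extending $z$ suitably for $t < 0$ near the endpoint $w$ and using that $z(\tau) \to -w$ for the tail). Since $\rho_\delta \ge 0$ integrates to $1$ and $\lyapno$ restricted to bounded subsets of $\sob$ is, modulo the quadratic form which is convex, well-controlled — the Gagliardo seminorm squared is convex, so by Jensen $[z_\delta(t)]_s^2 \le \int \rho_\delta(t-\tau)[z(\tau)]_s^2\,d\tau$, while the concave perturbation $-\tfrac{\alpha}{q}\int|\cdot|^q$ needs an estimate showing $\int_\Omega |z_\delta(t)|^q \to \int \rho_\delta(t-\tau)\int_\Omega|z(\tau)|^q$-type control up to $o(1)$ as $\delta \to 0$, which holds because time-averaging is continuous in $L^q(\Omega\times(0,T))$ and the map $t \mapsto z(t)$ is at least measurable with values in $L^q$ — one gets $\sup_t \lyap{z_\delta(t)} \le \sup_t \lyap{z(t)} + o(1)$. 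Crucially $z_\delta$ is now Lipschitz, hence continuous, from $[0,\infty)$ into $\sob$ (difference quotients of a mollification of a bounded $\sob$-valued map are bounded in $\sob$), and $z_\delta(0)$, $z_\delta(\infty)$ are close to $w$, $-w$ in $\sob$; a short final adjustment — prepending and appending two straight Sobolev segments of small energy cost connecting $w$ to $z_\delta(0)$ and $z_\delta(\infty)$ to $-w$, using that the energy is continuous along segments between nearby points — produces a bona fide element of $\Gamma$ with minimax $\le \Lambda^\ast + \varepsilon + o(1)$. Letting $\delta \to 0$ and then $\varepsilon \to 0$ yields $\Lambda^{\ast\ast} \le \Lambda^\ast$, completing the identification; since $\Lambda^{\ast\ast}$ is a critical value by the mountain pass theorem, so is $\Lambda^\ast$, which is the assertion. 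The one point requiring genuine care is the behavior near $t = \infty$: I would handle it by a change of variables $t \mapsto \sigma = 1 - e^{-t}$ converting everything to the compact interval $[0,1]$ before mollifying, so that "mollification near the endpoint" is unproblematic and the tail condition $z(t) \to -w$ in $L^0$ plus the $\sob$-bound upgrades (via compact embedding and weak lower semicontinuity, along the lines already used in the proof of Lemma~\ref{basic-ell-2}$(i)$) to a statement strong enough to keep the energy under control at the right endpoint.
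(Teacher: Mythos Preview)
Your overall architecture is the same as the paper's: reduce the minimax over $\mathfrak{Z}$ to the classical mountain pass value over $\sob$-continuous paths on $[0,1]$, then invoke the Ambrosetti--Rabinowitz theorem via Lemma~\ref{basic-ell-2}. The difference lies in the regularization step for the hard inequality $\Lambda^{\ast\ast}\le\Lambda^\ast$. The paper first upgrades $L^0$-continuity to $L^q$-continuity (using the $\sob$-bound, Fatou's lemma, and the compact embedding), reparametrizes to $[0,1]$, and then replaces the path by a \emph{piecewise affine interpolation} through finitely many nodes at small $L^q$-distance; convexity of $[\,\cdot\,]_s^2$ controls the seminorm along segments, and the $L^q$ term is handled by an explicit remainder $\mathcal{R}_1-\mathcal{R}_2\le C\delta$. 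You instead propose \emph{time-mollification} of the trajectory, using Jensen for the convex seminorm part and a smallness argument for the $L^q$ part. Both approaches ultimately rest on the same two ingredients --- convexity of $[\,\cdot\,]_s^2$ and $L^q$-continuity of $t\mapsto z(t)$ --- so they are close cousins; the paper's interpolation is more elementary (it avoids Bochner integrals and only touches $z$ at finitely many times), while yours is more streamlined once the measurability housekeeping is done.

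One point where your sketch is too loose: you justify the control on $-\tfrac{\alpha}{q}\int|z_\delta|^q$ by saying $t\mapsto z(t)$ is ``at least measurable with values in $L^q$''. Measurability alone is not enough here, because Jensen for the convex map $\varphi\mapsto\int|\varphi|^q$ goes the \emph{wrong} way for your purposes. What you actually need is that $\int_\Omega|z_\delta(t)|^q$ differs from the $\rho_\delta$-average of $\int_\Omega|z(\tau)|^q$ by $o(1)$ uniformly in $t$, and for that you must use uniform $L^q$-continuity of the path --- precisely the upgrade from $L^0$- to $L^q$-continuity that the paper carries out as its first step. You allude to this mechanism elsewhere (``$L^0$-convergence of a bounded sequence forces $L^q$-convergence''), so the fix is only to invoke it here rather than mere measurability. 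With that in place, your mollification argument goes through; the endpoint issue is cleanly resolved, as you note, by reparametrizing to $[0,1]$, extending by constants, and mollifying so that the regularized path equals $w$ and $-w$ exactly at shifted endpoints, making the patching segments unnecessary.
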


\begin{proof}
We first notice that $\mathfrak{Z}$ can been replaced with the class
\[
\begin{split}
\mathfrak{Z}_q = 
\Big\{z \in C([0,+\infty);L^q(\Omega))\cap L^\infty([0,+\infty);\sob )
	& \colon 
	\text{
	$z(0)=w$, $\lim_{t\to+\infty}\|z(t)+w\|_{L^q(\Omega)}=0$}
	\Big\}
\end{split}\,.
\]
Indeed $\mathfrak{Z}_q\subset \mathfrak{Z}$ because the convergence in $L^{q}$ implies that in measure. For the reverse inclusion,
we take $z\in \mathfrak{Z}$. Then
$[z(t)]_{s}^2\leq C$, for some constant $C>0$,
for all $t\in [0,+\infty)\setminus \mathcal{N}$, where $\mathcal{N}$ is a negligible set in $[0,+\infty)$. If $\bar{t}\in \mathcal{N}$ and $(t_{n})\subset [0,+\infty)\setminus\mathcal{N}$ converges to $\bar{t}$. Since by Fatou Lemma the nonlocal energy $[\,{\cdot}\,]_{s}^2$ is lower semicontinuous with respect to the convergence in measure and $z(t_n)\to z(\bar t)$ in measure, we have $[z(\bar t)]^2\le C$ as well. Hence, $z$ is equibounded in $\sob$ and, by the compact embedding
in $L^q(\Omega)$, it follows that from every arbitrary sequence $t_n$ converging to any given $t_0\ge0$ we may extract another one along which $z$
converges in $L^q(\Omega)$ to a limit, that must always be $z(t_0)$ because $z(t_n)\to z(t_0)$ in measure. Hence, by the Urysohn property of $L^q(\Omega)$ convergence, we see that 
$z$ is continuous with values in $L^q(\Omega)$.
\medskip

Next, one sees that
\[
 \inf_{z\in\mathfrak{Z}_q}\sup_{t\in[0,+\infty)} 
	\lyap{z(\cdot,t)}
	= \inf_{\gamma\in\mathfrak{G}} \sup_{t\in[0,1]} \lyap{\gamma(t)}\,,
\]
where
\[
	\mathfrak{G}=\Big\{\gamma\in C([0,1];L^q(\Omega))\cap L^\infty([0,1];\sob )\mathbin{\colon}
	\gamma(0)=w\,,\ \gamma(1)=-w\Big\}\,.
\]
This can be seen by repeating verbatim the passages in Part 3 of the proof of~\cite[Theorem 4.2]{bravol}, and we skip the details here.
\medskip

The third step is then to prove that 
\[
	\inf_{\gamma\in\mathfrak{G}} \sup_{t\in[0,1]} \lyap{\gamma(t)}=
	\inf_{\sigma\in \mathfrak{S}} \sup_{t\in[0,1]}\lyap{\gamma(t)}\,,
\]
where 
\[
	\mathfrak{S} = \Big\{\gamma\in C([0,1];\sob)\mathbin{\colon}	\gamma(0)=w\,,\ \gamma(1)=-w\Big\}\,.
\]
We prove this claim by arguing as in Part 2 of the proof of~\cite[Theorem 4.2]{bravol}. More precisely,
 we fix $\varepsilon>0$ and
we take $\gamma_\varepsilon \in C([0,1];L^q(\Omega))$, such that $\gamma_\varepsilon(0)=w$,
$\gamma_\varepsilon(1)=-w$, and 
\begin{equation}
\label{MPqmin}
\sup_{t\in[0,1] } \lyap{\gamma_\varepsilon(t)}<\inf_{\sigma\in \mathfrak{G}} \sup_{t\in[0,1]}\lyap{\gamma(t)} + \varepsilon\,.
\end{equation}
Thus, if we fix $\delta>0$, by uniform continuity there exists $\eta>0$ such that if $|t-s|<\eta$, we have
\[
\|\gamma_\varepsilon(t)-\gamma_\varepsilon(s)\|_{L^q(\Omega)}<\delta.
\]
Now we take a partition $\{t_0,\dots,t_k\}$ of $[0,1]$ such that
\[
t_0=0,\qquad t_k=1,\qquad |t_i-t_{i+1}|<\eta, \mbox{ for every } i=0,\dots,k-1,
\]
then we define the new curve $\theta_\varepsilon:[0,1]\to \sob$, which is given by the piecewise affine interpolation of the points $\gamma_\varepsilon(t_0),\gamma_\varepsilon(t_1),\dots,\gamma_\varepsilon(t_k)$, namely
\[
\theta_\varepsilon(t)=\left(1-\frac{t-t_i}{t_{i+1}-t_i}\right)\,\gamma_\varepsilon(t_i)+\frac{t-t_i}{t_{i+1}-t_i}\,\gamma_\varepsilon(t_{i+1}),\qquad \mbox{ for every } t\in[t_i,t_{i+1}].
\]

Then we take $\delta>0$ and we find a finite increasing sequence of real numbers $t_i\in [0,1]$ such that
for each interval $[t_{i-1},t_i] $ we have $\|\gamma_\varepsilon(t)-\gamma(s)\|_{L^q(\Omega)}\le \delta$ for all $s,t$ in that interval.
Let $\theta_{\varepsilon,\delta}$ denote the  piecewise affine interpolation of the finite sequence of points $\gamma(t_i)$. We set
\[
\tau=\frac{t-t_i}{t_{i+1}-t_i}.
\]
Then, by the standard convexity of the squared seminorm we have
\[
	[\theta_{\varepsilon,\delta}((1-\tau)t_{i-1} + \tau t_i) ]_{s}^2\le 
	(1-\tau)[\gamma_\varepsilon(t_{i-1})]_{s}^2+\tau[\gamma_\varepsilon(t_i)]_{s}^2\,.
\]
Thus, 
\[
	\lyap{\theta_{\varepsilon,\delta}((1-\tau)t_{i-1} + \tau t_i)}
	\le (1-\tau)\lyap{\gamma_\varepsilon(t_{i-1})}+\tau\lyap{\gamma_\varepsilon(t_i)} + \frac{\alpha}{q}(\mathcal{R}_1-\mathcal{R}_2)\,,
\]
where
\[
\mathcal{R}_1=(1-\tau) \int_\Omega|\gamma_\varepsilon(t_{i-1})|^q\,dx +\tau\int_\Omega|\gamma_\varepsilon(t_{i})|^q\,dx \,,\quad
	\mathcal{R}_2 = \int_\Omega |(1-\tau)\gamma_\varepsilon(t_{i-1})+\tau\gamma_\varepsilon(t_{i})|^q\,dx\,.
\]
Now, by using \eqref{MPqmin} we get
\begin{equation}
\label{MPprev}
\lyap{\theta_{\varepsilon,\delta}((1-\tau)t_{i-1} + \tau t_i)}\le 
\inf_{\sigma\in \mathfrak{S}} \sup_{t\in[0,1]}\lyap{\gamma(t)} + \varepsilon + \frac{\alpha}{q}(\mathcal{R}_1-\mathcal{R}_2)\,,
\end{equation}
Also, since the infimum in \eqref{lambda1} is positive, by the coercivity estimate of Lemma~\ref{lm:coerc} and
by \eqref{MPqmin},
\[
	\left(\int_\Omega |\gamma_\varepsilon(t_i)|^q\,dx\right)^\frac{q-1}{q}\le \lambda_1(\Omega,s,q)^\frac{q-1}{2}
	\left[\inf_{\sigma\in \mathfrak{S}} \sup_{t\in[0,1]}\lyap{\gamma(t)} +\varepsilon + C(\Omega,s,q)\right].
\]
Hence, we can infer the estimate $\mathcal{R}_1-\mathcal{R}_2\le C\delta$, with a constant $C$ depending only on the data,
as done in~\cite{bravol}. Inserting this estimate in \eqref{MPprev} we arrive at 
that
\[
\sup_{t\in[0,1]}\lyap{\theta_{\varepsilon,\delta}(t)} \le\inf_{\sigma\in \mathfrak{G}} \sup_{t\in[0,1]}\lyap{\gamma(t)} + \varepsilon+C\delta
\]	
Since the piecewise affine path $\theta_{\varepsilon,\delta}$ is obviously
continuous with values in $\sob$ and $\delta,\varepsilon $ were arbitrary, we deduce that
\[
\inf_{\theta\in\mathfrak{S}}\sup_{t\in[0,1]}\lyap{\theta(t)} \le\inf_{\sigma\in \mathfrak{G}} \sup_{t\in[0,1]}\lyap{\gamma(t)}
\]
The opposite inequality also holds, because  $\mathfrak{S}\subset\mathfrak{G}$, and that ends the third step of the proof.
\medskip

By combining the previous three steps, we see that
\[
\Lambda^\ast = \inf_{\theta\in\mathfrak{S}}\sup_{t\in[0,1]}\lyap{\theta(t)}
\]
and in order to conclude it suffices to prove that the right hand side defines a critical level.
Since by Lemma~\ref{basic-ell-2} the functional $\lyapno$ has a mountain pass structure,
the desired conclusion is therefore a general consequence of~\cite[Chap. \S 2, Theorem 6.1]{S}.
\end{proof}

\subsection{First excited level and spectral gap.}
We set
\begin{equation}
\label{Lambda2}
\Lambda_2=\Lambda_2(s) = \inf\left\{\Lambda>\Lambda_1 \mathbin{\colon}
	\Lambda \text{ is a critical value of } \lyapno\right\}
\end{equation}
We point out that the set in the right hand side is never empty because it always contains $0$, which is the critical value
associated with the critical point $u\equiv0$. Also, its infimum is in fact a minimum because the critical levels form a closed set,
by Lemma~\ref{basic-ell-2}.
\medskip

We first notice that if a gap exists between $\Lambda_1$ and $\Lambda_2$ then it gives room to energy levels.

\begin{proposition} Let $1<q<2$ and $\alpha>0$, and assume that $\Lambda_2>\Lambda_1$. Then there exists a sign-changing function
$\varphi\in\sob$ with $\Lambda_1<\lyap{\varphi}<\Lambda_2$.
\end{proposition}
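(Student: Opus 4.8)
The plan is to exhibit such a $\varphi$ explicitly by flowing from the minimizer $w$ along a suitable deformation while keeping the energy strictly between $\Lambda_1$ and $\Lambda_2$. First I would recall that, by Lemma~\ref{lm:basic-elliptic}, the only minimisers of $\lyapno$ are $w>0$ and $-w$, and $\Lambda_1<0$. Since $\Lambda_2>\Lambda_1$ by hypothesis, I want to produce a sign-changing test function whose energy still lies below $\Lambda_2$ but strictly above $\Lambda_1$. The natural candidate is to start from a function that is close to $w$ in a region and to $-w$ in another, e.g.\ $\varphi_\lambda = w - \lambda \psi$ for $\psi\in\sob$ supported where $w$ is bounded below, or more robustly a convex-type combination $\varphi = (1-t)w + t(-w)=(1-2t)w$ — but that is never sign changing except at the trivial value, so instead I would localize: pick a ball $B\Subset\Omega$ and let $\varphi$ agree with $w$ outside $B$ and equal $-w$ on a smaller sub-ball, interpolating in between, so that $\varphi$ genuinely changes sign.

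The key analytic input is continuity and strictness of the energy. Consider the path $\sigma\mapsto\varphi_\sigma$ in $\sob$, $\sigma\in[0,1]$, with $\varphi_0 = w$; by construction $\varphi_\sigma$ is sign-changing for $\sigma>0$ small. Since $\lyapno$ is continuous on $\sob$ and $\lyap{w}=\Lambda_1$, for $\sigma$ small enough we get $\lyap{\varphi_\sigma}<\Lambda_1+\tfrac12(\Lambda_2-\Lambda_1)<\Lambda_2$. This gives the upper bound. For the lower bound, $\lyap{\varphi_\sigma}\ge\Lambda_1$ always holds by definition of $\Lambda_1$, and equality would force $\varphi_\sigma\in\{w,-w\}$ by the uniqueness in Lemma~\ref{lm:basic-elliptic}$(iii)$; but $\varphi_\sigma$ is sign-changing, hence neither $w$ nor $-w$, so $\lyap{\varphi_\sigma}>\Lambda_1$ strictly. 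Thus $\varphi=\varphi_\sigma$ for a suitably small $\sigma>0$ does the job.

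The main obstacle is ensuring both that $\varphi_\sigma$ is genuinely sign-changing in $\Omega$ (not merely on a null set) and that it lies in $\sob$ with finite Gagliardo seminorm; this requires choosing the interpolation so the cut does not destroy the $W^{s,2}$ regularity. One clean way is to fix $\eta\in C_0^\infty(B)$ with $0\le\eta\le1$, $\eta\equiv1$ on a smaller ball $B'$, and set $\varphi_\sigma = w - 2\sigma\,\eta\, w$ — wait, this is not sign-changing for small $\sigma$; better take $\varphi_\sigma = w - (1+\sigma)\,\eta\, \chi$ where $\chi$ is a fixed function with $\chi\equiv w$ on $B'$, so that on $B'$ we get $\varphi_\sigma = -\sigma w<0$ while $\varphi_\sigma=w>0$ outside $B$. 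Since $\eta\chi\in\sob$ (it is $C_0^\infty$-approximable, being a smooth-cutoff times a function that is regular inside $\Omega$), $\varphi_\sigma\in\sob$ with $[\varphi_\sigma]_s$ bounded uniformly for $\sigma\in[0,1]$, and $\sigma\mapsto\lyap{\varphi_\sigma}$ is continuous with value $\Lambda_1$ at $\sigma=0$ (noting $\varphi_0 = w-\eta\chi$, so one should instead parametrize so that $\varphi_0=w$ exactly: take $\varphi_\sigma = w - \sigma(1+\sigma)\eta\chi/\text{(something)}$, or simply $\varphi_\sigma := w$ for $\sigma=0$ and $\varphi_\sigma$ the localized sign-flip for $\sigma\in(0,1]$, checking directly that $\lyap{\varphi_\sigma}\to\Lambda_1$ as $\sigma\to0^+$ by dominated convergence in both the seminorm and the $L^q$ term). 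With this, the strict inequalities follow as above, completing the proof.
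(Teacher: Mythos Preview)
Your overall strategy coincides with the paper's: produce a sign-changing $\varphi$ with $\lyap{\varphi}$ close enough to $\Lambda_1$ that it lies below $\Lambda_2$, and then use the uniqueness of minimisers from Lemma~\ref{lm:basic-elliptic}$(iii)$ to conclude $\lyap{\varphi}>\Lambda_1$. The lower-bound part is fine. The gap is in your construction for the upper bound. Your final candidate is $\varphi_\sigma = w - (1+\sigma)\,\eta\chi$ with $\eta\chi\equiv w$ on a fixed sub-ball $B'$, together with the claim that $\lyap{\varphi_\sigma}\to\Lambda_1$ as $\sigma\to0^+$. That claim is false: as $\sigma\to0^+$ one has $\varphi_\sigma\to w-\eta\chi$ in $\sob$, hence $\lyap{\varphi_\sigma}\to\lyap{w-\eta\chi}$, and there is no reason whatsoever for this number to lie below $\Lambda_2$. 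The ``dominated convergence'' remark cannot repair this, because the limit function is simply not $w$. The underlying obstruction is that $w>0$ everywhere in $\Omega$: a perturbation with \emph{fixed} support and amplitude large enough to force a sign change moves the energy by a fixed, uncontrolled amount.

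The paper circumvents this by separating supports. It replaces $w$ by the positive minimiser $w_\varepsilon$ on the shrunken set $\Omega_\varepsilon=\{x\in\Omega:\mathrm{dist}(x,\partial\Omega)>\varepsilon\}$, which vanishes identically on the strip $\Omega\setminus\Omega_\varepsilon$, and then subtracts a rescaled bump $\psi_\varepsilon$ supported in a tiny ball inside that strip. Since $w_\varepsilon\equiv0$ there, any nonzero $\psi_\varepsilon$ makes $w_\varepsilon-\psi_\varepsilon$ sign-changing, while $\lyap{\psi_\varepsilon}$ and the nonlocal cross term are $O(\varepsilon^N)$, and $\lyap{w_\varepsilon}\le\Lambda_2-\eta_0$ for small $\varepsilon$ by continuity of the ground-state energy in the domain. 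This decoupling of the sign-change requirement from the energy perturbation is precisely the idea your argument is missing.
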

\begin{proof}
As in~\cite[Proposition 3.5]{bravol}, $\varphi$ will be given by the separate contributions of the least energy solution $w_\varepsilon$ in
\(
\Omega_\varepsilon = \left\{x\in\Omega \mathbin{\colon} {\rm dist}(x,\partial\Omega)>\varepsilon\right\}
\)
and of a function $z_\varepsilon$ supported on a ball $B_{r_\varepsilon}(x_\varepsilon)$ contained in $\Omega\setminus \Omega_\varepsilon$.

We fix $\eta_0\in(0,\Lambda_2-\Lambda_1)$. Arguing as done in~\cite{bravol} we see that for an appropriate $\varepsilon_0>0$ we have
\begin{equation}
\label{fgap1}
\lyap{w_\varepsilon} = \inf\left\{\lyap{u}\mathbin{\colon}u\in W^{s,2}_0(\Omega_\varepsilon)\right\}\le \Lambda_2-\eta_0\,,
\quad \text{for all $\varepsilon\in(0,\varepsilon_0)$.}
\end{equation}
Then, we fix $\psi\in C^\infty_0(B_1)$ and for all $\varepsilon\in(0,\varepsilon_0)$ 
we define
$\psi_\varepsilon(x)=r_\varepsilon^s\psi(\frac{x-x_\varepsilon}{r_\varepsilon})$,
which implies
\begin{equation}
\label{fgap2}
	\lyap{\psi_\varepsilon} = \frac{r_\varepsilon^2}{2}\int_{\RN}\int_{\RN}
	\inter{u}\,dx\,dy - \frac{\alpha r_\varepsilon^{N+q}}{q}\int_{B_1}|\psi|^2\,dx\,.
\end{equation}
Also, we have the general identity
\begin{equation}
\label{fgap3}
\lyap{w_\varepsilon-\psi_{r_\varepsilon}} =\lyap{w_\varepsilon}
+\lyap{\psi_{r_\varepsilon}} +\int_{\Omega_\varepsilon}\int_{B_{r_\varepsilon}(x_\varepsilon)}
	\frac{w_\varepsilon(x)\psi_\varepsilon(y)}{|x-y|^{N+2s}}\,dx\,dy\,.
\end{equation}

Now we take $r_\varepsilon=\varepsilon^2$, so that
\begin{equation}
\label{fgap4}
	\int_{\Omega_\varepsilon}\int_{B_{r_\varepsilon}(x_\varepsilon)}
	\frac{w_\varepsilon(x)\psi_\varepsilon(y)}{|x-y|^{N+2s}}\,dx\,dy =O(\varepsilon^N)\,,\qquad \text{as $\varepsilon\to0^+$.}
\end{equation}
With the choice $r_\varepsilon=\varepsilon^2$, from \eqref{fgap2} we also get
$\lyap{\psi_{r_\varepsilon}}=O(\varepsilon^{2N})$, as $\varepsilon\to0^+$.
Pairing this and \eqref{fgap4}, from the identity \eqref{fgap3} we deduce
that $\lyap{w_\varepsilon-\psi_{\varepsilon^2}}= \lyap{w_\varepsilon}
+O(\varepsilon^N)$, as $\varepsilon\to0^+$. Hence and from \eqref{fgap1}
we infer that the function $\varphi_\varepsilon=w_\varepsilon-\psi_{\varepsilon^2}$
has energy $\lyap{\varphi_\varepsilon}<\Lambda_2$ for
$\varepsilon\in(0,\varepsilon_0) $ small enough. On the other hand, $\varphi \in \sob$ and so $\lyap{\varphi_\varepsilon}\ge\Lambda_1$, and
the inequality must be strict because of assertion $(iii)$ in Lemma~\ref{lm:basic-elliptic}.
\end{proof}

Now we recall a general consequence of the uniqueness of (positive) energy minimizing functions: if there is a spectral gap, then
the mountain pass does not collapse on the global minimum.

\begin{proposition}
\label{prop:MPexcited}
Under the assumptions of Proposition~\ref{prop:mszeta}, we have $\Lambda^\ast\ge\Lambda_2$.
\end{proposition}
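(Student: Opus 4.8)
The statement to establish is that $\Lambda^\ast\ge\Lambda_2$ whenever $\Lambda_2>\Lambda_1$. The plan is to argue by contradiction: suppose $\Lambda_1\le\Lambda^\ast<\Lambda_2$. By Proposition~\ref{prop:mszeta}, $\Lambda^\ast$ is a critical value of $\lyapno$, and by the very definition \eqref{Lambda2} of $\Lambda_2$ as the smallest critical value strictly above $\Lambda_1$, the only critical value in $[\Lambda_1,\Lambda_2)$ is $\Lambda_1$ itself; hence we would be forced to conclude $\Lambda^\ast=\Lambda_1$.

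The heart of the argument is therefore to rule out $\Lambda^\ast=\Lambda_1$. First I would recall that $\Lambda^\ast = \inf_{\sigma\in\mathfrak S}\sup_{t\in[0,1]}\lyap{\sigma(t)}$, the mountain pass value over Sobolev-continuous paths joining $w$ to $-w$, as established in the proof of Proposition~\ref{prop:mszeta}. Every such path $\sigma$ has $\sigma(0)=w$ and $\sigma(1)=-w$, the two points that realize the global minimum $\Lambda_1$ by Lemma~\ref{lm:basic-elliptic}$(iii)$; since $\lyapno$ has the mountain pass structure (Lemma~\ref{basic-ell-2}$(ii)$), there is a closed ``collar'' $\{\varphi\colon\min(\|\varphi-w\|_{\sob},\|\varphi+w\|_{\sob})\ge\|w\|_{\sob}\}$ on which $\lyapno$ is bounded below by some $\Lambda_1+\delta_0$ with $\delta_0>0$. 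Any path in $\mathfrak S$ must cross this collar by continuity (it starts and ends at points at distance $2\|w\|_{\sob}$, so at some time it is at distance exactly $\|w\|_{\sob}$ from each of $\pm w$ — or at least one can arrange to pass through the set where the distance to the nearer of $\pm w$ equals $\|w\|_{\sob}$), so $\sup_{t}\lyap{\sigma(t)}\ge\Lambda_1+\delta_0$ for every $\sigma\in\mathfrak S$. Taking the infimum over $\sigma$ gives $\Lambda^\ast\ge\Lambda_1+\delta_0>\Lambda_1$, the contradiction we sought. Combined with the first paragraph, $\Lambda^\ast\ge\Lambda_2$.

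The step I expect to be the main (minor) obstacle is the topological claim that every path in $\mathfrak S$ genuinely meets the collar $\{\min(\|\varphi-w\|,\|\varphi+w\|)\ge\|w\|\}$ — in an infinite-dimensional Banach space one must verify that the function $t\mapsto \|\sigma(t)-w\|_{\sob}-\|\sigma(t)+w\|_{\sob}$ changes sign (it is $-2\|w\|_{\sob}<0$ at $t=0$ and $+2\|w\|_{\sob}>0$ at $t=1$), hence vanishes at some $t^\ast$ by the intermediate value theorem, and at that $t^\ast$ both distances are equal to, say, $d$; one then checks $d\ge\|w\|_{\sob}$ from the reverse triangle inequality $2\|w\|_{\sob}=\|{-w}-w\|_{\sob}\le\|\sigma(t^\ast)-w\|_{\sob}+\|\sigma(t^\ast)+w\|_{\sob}=2d$. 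This makes the crossing rigorous. The remaining ingredients — that $\Lambda^\ast$ is a critical value, that the critical set is closed, and the mountain pass lower bound — are all already in hand from Proposition~\ref{prop:mszeta} and Lemma~\ref{basic-ell-2}, so the proof is short.
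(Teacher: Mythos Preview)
Your argument is correct. Both you and the paper reduce the question to ruling out $\Lambda^\ast=\Lambda_1$ (the case $\Lambda_2=\Lambda_1$ being trivial, and otherwise the fact that $\Lambda^\ast$ is a critical value forces $\Lambda^\ast=\Lambda_1$ if $\Lambda^\ast<\Lambda_2$), but the way you exclude this is slightly different from the paper's. The paper works directly with the class $\mathfrak{Z}$ of $L^0$-continuous paths: assuming $\Lambda^\ast=\Lambda_1$, it takes a sequence of near-optimal paths $z_j\in\mathfrak{Z}$, picks disjoint $L^0$-open neighborhoods of $w$ and $-w$, and uses connectedness to locate on each path a point $z_j(\cdot,t_j)$ outside both neighborhoods; these points form a minimizing sequence for $\lyapno$ that stays $L^0$-bounded away from the only minimizers $\pm w$, contradicting the compactness of minimizing sequences. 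You instead pass (via the identifications proved inside Proposition~\ref{prop:mszeta}) to the Sobolev class $\mathfrak{S}$ and invoke the mountain pass structure of Lemma~\ref{basic-ell-2}$(ii)$ directly, showing by an intermediate-value/triangle-inequality argument that every $\sigma\in\mathfrak{S}$ meets the collar $\{\min(\|\varphi-w\|_{\sob},\|\varphi+w\|_{\sob})\ge\|w\|_{\sob}\}$, on which $\lyapno\ge\Lambda_1+\delta_0$. Your route is a bit more direct since Lemma~\ref{basic-ell-2}$(ii)$ has already packaged the relevant lower bound; the paper's route has the (minor) advantage of staying at the level of the original $L^0$ definition of $\Lambda^\ast$ without appealing to the chain of equalities $\mathfrak{Z}\leadsto\mathfrak{Z}_q\leadsto\mathfrak{G}\leadsto\mathfrak{S}$.
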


\begin{proof} Either $\Lambda_2=\Lambda_1$, and then there is nothing to prove, or else $\Lambda_2>\Lambda_1$.
In arguing by contradiction, we then assume $\Lambda_2>\Lambda_1$ and $\Lambda^\ast=\Lambda_1$. Then
we can find a sequence of admissible paths $z_j$ for the definition of $\Lambda^\ast$ such that
$\sup_{t\ge0} \lyap{z_j(\cdot,t)}<\Lambda_1+2^{-j}$. Since each $t\mapsto z_j(\cdot,t)$ is continuous 
with values in $L^0(\Omega)$, there exists two sets $B,B'\subset \sob$
that are disjoint and open with respect to the (metrizable) topology of the convergence in measure, with $w\in B$ and $-w\in B'$, such that
for every $j $ we find $t_j>0$ with $\lyap{z_j(\cdot,t_j)}<\Lambda_1+2^{-j}$. Then, the functions $z_j(\cdot, t_j)$ would form a minimizing sequence for $\lyapno$, in contradiction with the fact that
they all are bounded away both from the minimizers $w,-w$ in $L^0(\Omega)$.
\end{proof}

We end this section by recalling an important consequence of the stability of energy minimizing solution of the elliptic problem,
which in turn implies the fundamental gap, proved in~\cite{fralic} for the non-local problem following the method
used in~\cite{bradepfra} for the local one. The following statement differs little from the original one in~\cite{fralic},
which simply states the isolation with respect to the $L^1(\Omega)$ topology instead of the topology of the convergence in measure.

\begin{theorem}\label{isolated}
Let $1<q<2$ and $\alpha>0$, and let $w$ be the positive minimizer of $\lyapno$. 
Assume that $\Omega$ is a bounded $C^{1,1}$ open set. Then, the set $\{w,-w\}$ is bounded away in $L^0(\Omega)$ from any critical point
of $\lyapno$.
\end{theorem}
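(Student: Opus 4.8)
The plan is to deduce Theorem~\ref{isolated} from the corresponding statement in~\cite{fralic}, which asserts that $\{w,-w\}$ is isolated, among critical points of $\lyapno$, with respect to the $L^1(\Omega)$ topology. Since convergence in $L^1(\Omega)$ implies convergence in measure, the $L^1$-isolation does not immediately give the stronger $L^0$-isolation, so the real work is to upgrade the topology. The key observation is that critical points of $\lyapno$ are not arbitrary elements of $L^0(\Omega)$: by Lemma~\ref{basic-ell-2}(iii) and Eq.~\eqref{nehari}, every critical point $u$ satisfies $[u]_s^2 = \alpha\int_\Omega|u|^q\,dx$ and has energy in the bounded set $[\Lambda_1,0]$, hence is bounded in $\sob$ by a universal constant depending only on $\Omega,q,s$. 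This a priori bound is what makes convergence in measure effectively equivalent to $L^1$ (indeed $L^q$) convergence along sequences of critical points.

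Concretely, I would argue by contradiction. Suppose $\{w,-w\}$ is not bounded away in $L^0(\Omega)$ from the set of critical points. Then there is a sequence $(u_n)$ of critical points of $\lyapno$, none equal to $\pm w$, with $u_n\to w$ (say) in measure. By the uniform $\sob$-bound just described, $(u_n)$ is bounded in $\sob$; by the compact embedding $\sob\hookrightarrow L^q(\Omega)$ a subsequence converges strongly in $L^q(\Omega)$, and the limit must coincide with $w$ because convergence in measure identifies limits. Since $\Omega$ is bounded, $L^q(\Omega)\hookrightarrow L^1(\Omega)$ continuously, so $u_n\to w$ in $L^1(\Omega)$ as well (along that subsequence). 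But then we have produced a sequence of critical points, different from $\pm w$, converging to $w$ in $L^1(\Omega)$, contradicting the $L^1$-isolation statement of~\cite{fralic}. This contradiction proves the theorem.

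The one point that needs a little care is that the $L^1$-isolation in~\cite{fralic} is phrased as the existence of an $L^1$-neighborhood of $\{w,-w\}$ containing no other critical point; I would restate it in the contrapositive sequential form (no sequence of distinct critical points converges in $L^1$ to $w$ or to $-w$) so that the contradiction above is immediate. One should also check, when passing to the subsequence, that the hypothesis "$u_n\to w$ in measure, $u_n\neq w$" is preserved — it trivially is, since both properties pass to subsequences. I expect the main (and only mild) obstacle to be simply making sure the uniform energy/seminorm bound on critical points is in place before invoking compactness; this is exactly Eq.~\eqref{nehari} combined with Lemma~\ref{lm:coerc}, both already available, so no new estimate is required. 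Everything else is a routine compactness-plus-contradiction argument exploiting that on the constraint set of critical points the topology of convergence in measure, the $L^q$ topology, and the $L^1$ topology all induce the same notion of sequential convergence.
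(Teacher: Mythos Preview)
Your proposal is correct and follows essentially the same route as the paper: argue by contradiction, use the a priori $\sob$-bound on critical points (the paper gets it from negativity of critical energies plus coercivity, you from \eqref{nehari}, which is equivalent), apply the compact embedding into $L^q(\Omega)$ to upgrade convergence in measure to $L^q$ convergence, and then invoke the isolation result of~\cite{fralic}. The only cosmetic difference is that you pass further from $L^q$ to $L^1$ before citing~\cite{fralic}, whereas the paper contradicts~\cite[Proposition~6.1]{fralic} directly at the $L^q$ level; since $L^q(\Omega)\hookrightarrow L^1(\Omega)$ on a bounded domain, this is immaterial.
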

\begin{proof}
We argue by contradiction and we assume that a sequence of critical points $u_j$ of $\lyapno$ converges
in measure to $w$. Since critical energies are negative (see statement $(iii)$ in Lemma~\ref{basic-ell-2})
and the energy is coercive (see Lemma~\ref{lm:coerc}), the sequence belongs to a bounded subset of $\sob$.
Then, by the compactness of the embedding into $L^q(\Omega)$, the sequence converges to $w$ also in $L^q(\Omega)$,
which contradicts the conclusion of~\cite[Proposition 6.1]{fralic}.
\end{proof}
Arguing similarly as in \cite[Lemma 3.4]{bravol} one can observe that if $\left\{\varphi_{n}\right\}$ is minimizing sequence of  $\lyapno$, then it converges, up to subsequence, either to the positive minimizer $w$ or to $-w$. 
We have then the following direct consequence of Theorem \ref{isolated}, namely the fundamental gap between the first and the second critical energy level of the functional $\lyapno$. The proof is similar to the relevant one in \cite[Proposition 3.5]{bravol}.
\begin{corollary}\label{cor:FG}
Let $1<q<2$ and $\alpha>0$. If $\Omega$ is a bounded $C^{1,1}$ open set, then $\Lambda_2>\Lambda_1$.
\end{corollary}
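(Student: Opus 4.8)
The plan is to argue by contradiction, reducing the statement to the isolation result of Theorem~\ref{isolated} together with the dichotomy for minimizing sequences recalled just above the corollary. Since $\Lambda_2$ is, by \eqref{Lambda2}, an infimum over a set of numbers strictly larger than $\Lambda_1$, we always have $\Lambda_2\ge\Lambda_1$; moreover this infimum is attained, as the set of critical values of $\lyapno$ is closed (indeed compact) by Lemma~\ref{basic-ell-2}$(iii)$. Hence, if the conclusion $\Lambda_2>\Lambda_1$ failed we would have $\Lambda_2=\Lambda_1$, and since $\Lambda_1$ itself does not lie in the set $\{\Lambda>\Lambda_1\colon\Lambda\text{ is a critical value}\}$, there would exist a sequence of critical values $\Lambda^{(j)}>\Lambda_1$ with $\Lambda^{(j)}\to\Lambda_1$. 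Choosing, for each $j$, a critical point $u_j\in\sob$ of $\lyapno$ with $\lyap{u_j}=\Lambda^{(j)}$, we note that $\lyap{u_j}>\Lambda_1=\lyap{w}=\lyap{-w}$, so that $u_j\notin\{w,-w\}$ for every $j$.

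The next step is to observe that $(u_j)_j$ is a minimizing sequence for $\lyapno$ on $\sob$, because $\lyap{u_j}=\Lambda^{(j)}\to\Lambda_1$, which is the minimum in \eqref{minL}. By the coercivity estimate of Lemma~\ref{lm:coerc}, $(u_j)_j$ is bounded in $\sob$; hence, by the compactness of the embedding $\sob\hookrightarrow L^q(\Omega)$, a subsequence (not relabelled) converges weakly in $\sob$ and strongly in $L^q(\Omega)$ to some limit, and by the weak lower semicontinuity of $[\,{\cdot}\,]_s^2$ this limit realizes $\Lambda_1$, hence is a minimizer, hence belongs to $\{w,-w\}$ by Lemma~\ref{lm:basic-elliptic}$(iii)$. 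Since convergence in $L^q(\Omega)$ implies convergence in measure on the bounded set $\Omega$, we conclude that, along this subsequence, $u_j\to w$ or $u_j\to -w$ in $L^0(\Omega)$; this is precisely the dichotomy for minimizing sequences recalled above.

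Finally, this contradicts Theorem~\ref{isolated}: the $u_j$ are critical points of $\lyapno$, all different from $w$ and from $-w$, yet a subsequence of them converges in $L^0(\Omega)$ to $w$ or to $-w$, whereas $\{w,-w\}$ is bounded away in $L^0(\Omega)$ from the critical set of $\lyapno$. This is where the assumption that $\partial\Omega$ be of class $C^{1,1}$ enters, through the hypotheses of Theorem~\ref{isolated}, which in turn rests on the stability/isolation of the positive minimizer established in~\cite{fralic}. The contradiction establishes $\Lambda_2>\Lambda_1$.

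I expect no genuine analytic obstacle in this argument: all the substance of the spectral gap is already packed into Theorem~\ref{isolated}, and what remains is essentially bookkeeping. The only point deserving a little care is the passage from ``$\Lambda_2=\Lambda_1$'' to the existence of a bona fide sequence of critical values strictly above $\Lambda_1$ that accumulate at $\Lambda_1$ — it is exactly this that produces the critical points $u_j\neq\pm w$ needed to trigger the contradiction. One could instead invoke Propositions~\ref{prop:mszeta} and~\ref{prop:MPexcited} to know that $\Lambda^\ast\ge\Lambda_2$ is a critical value and argue that $\Lambda^\ast>\Lambda_1$ by the same minimizing-sequence dichotomy applied along an $L^0$-continuous path joining $w$ to $-w$; but this alone would only give $\Lambda_2\le\Lambda^\ast$, and one would still have to rule out critical values accumulating at $\Lambda_1$ via Theorem~\ref{isolated}, so nothing is really gained over the direct contradiction above.
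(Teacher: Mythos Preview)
Your argument is correct and is precisely the one the paper has in mind: the authors describe the corollary as a ``direct consequence of Theorem~\ref{isolated}'' and refer to \cite[Proposition~3.5]{bravol} for the analogous local proof, and your contradiction via a sequence of critical points $u_j\notin\{w,-w\}$ with $\lyap{u_j}\searrow\Lambda_1$, combined with the minimizing-sequence dichotomy and the $L^0$-isolation of $\{w,-w\}$, is exactly that argument. The care you take in extracting the sequence $\Lambda^{(j)}>\Lambda_1$ from the assumption $\Lambda_2=\Lambda_1$ is the only nontrivial bookkeeping, and you handle it correctly.
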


\section{The (rescaled) parabolic problem}\label{sec:4}
Given a solution $u$ of \eqref{FPMEintro}, the rescaled function $v(x,t)=e^{\alpha t} u(x,e^t-1)$ solves the following Dirichlet initial boundary value problem
\begin{equation}\label{FPMEs}
\left\{\begin{array}{rcll}
\partial_{t}v&=&-(-\Delta)^{s} \Phi(v)+\alpha v, & \mbox{ in } Q:=\Omega\times (0,+\infty),\\
v&=&0, & \mbox{ in } \R^{N}\setminus\Omega\times(0,+\infty),\\
v(\cdot,0)&=&u_0,& \mbox{ in } \Omega.
\end{array}
\right.
\end{equation}
We recall here the definition of weak solutions of the rescaled fractional porous media equation problem to \eqref{FPMEintro}.
\begin{definition}\label{energyweak}
Given $T\in(0,+\infty]$ and given
$u_0\in L^{m+1}(\Omega)$, with $\Phi(u_0)\in \sob$, a function
\begin{equation}
\label{asspt:enweak}
\text{$v\in C([0,T];L^{m+1}(\Omega))$, with $\Phi(v)\in L^2((0,T);\sob)$,}
\end{equation}
is said to be a weak solution of \eqref{FPMEs} in
$Q_T=\Omega\times(0,T)$ if the integral equation
\begin{equation}
\label{FMPEsW}
\begin{split}
-\iint_{Q_T} v\ \frac{\partial \eta}{\partial t}\,dx\,dt & +\int_0^T \int_{\mathbb R^{N}}\int_{\mathbb R^N} \frac{(\Phi(v(x,t))-\Phi(v(y,t)))(\eta(x,t)-\eta(y,t))}{|x-y|^{N+2s}}\,dx\,dy\,dt
\\ & =\alpha\iint_{Q_T} v\ \eta\,dx\,dt
\end{split}
\end{equation}
holds for all $\eta\in C_{c}^\infty(Q_{T})$.
\end{definition}

\begin{remark}
Under the {\em a priori} assumption \eqref{asspt:enweak} made in Definition~\ref{energyweak}, weak solutions are often called {\em energy weak solution} in the literature~\cite{DQRV}. Also, as done in~\cite{DQRV}, it is possible to prove that weak solutions are {\em strong}, with a number of implications ({\em e.g.}, $L^1$ contractivity, comparison principles, and more), but in this paper we can limit our attention to (energy) weak solutions. 
\end{remark}

\subsection{Well-posedness and entropy-entropy dissipation inequality}
The inequality proved in the following theorem is a crucial ingredient for the proof of our main result.

\begin{theorem}\label{exist-ima}
Let $u_0\in L^{m+1}(\Omega)$, with $\Phi(u_0)\in \sob$. Then, there exists a unique weak solution
of \eqref{FPMEs} in $Q$ with $v(0)=u_0$. Moreover,
the estimate
\begin{equation}
\label{EED}
\lyapm{\frac{m+1}{m}}{\Phi(v(\cdot,t))}+C_{0} \int_0^t\int_\Omega \left\vert\partial_t g(v)\right\vert^2\,dx\,dy\le
\lyapm{\frac{m+1}{m}}{\Phi(u_0)}
\end{equation}
holds for all $t>0$, for some positive constant $C_{0}=C_{0}(m)$, depending only on $m$, where $g$ is the nonlinearity given by
\[
g(v)=|v|^{\frac{m-1}{2}}v.
\]

\end{theorem}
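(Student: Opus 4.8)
The strategy is the one outlined in the introduction: construct the solution by the implicit Euler time-discretization, derive the entropy dissipation inequality at the discrete level where it reduces to a convexity/monotonicity estimate, and then pass to the limit. Fix a time step $h>0$ and define iteratively $v_0 = u_0$ and, given $v_k$, let $v_{k+1}\in L^{m+1}(\Omega)$ with $\Phi(v_{k+1})\in\sob$ solve the stationary problem
\[
	\frac{v_{k+1}-v_k}{h} + (-\Delta)^s\Phi(v_{k+1}) = \alpha v_{k+1}\,,
\]
understood weakly. Existence and uniqueness of each step follows from minimization of the strictly convex functional
\[
	\varphi\longmapsto \frac{1}{2h}\int_\Omega |\Phi^{-1}(\varphi)-v_k|^2\cdot(\text{appropriate weight})\,dx
	+ \frac12[\varphi]_s^2 - \text{(lower order)}\,,
\]
or, more robustly, by a monotone-operator argument: the map $\varphi\mapsto \Phi^{-1}(\varphi)/h + (-\Delta)^s\varphi - \alpha\Phi^{-1}(\varphi)$ is strictly monotone and coercive on $\sob$ for $h$ small (the $\alpha$-term is subcritical and absorbed), so Minty--Browder applies. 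One then defines the piecewise-constant interpolant $v^{(h)}(t) = v_k$ for $t\in[kh,(k+1)h)$.

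\textbf{The discrete entropy estimate.} Testing the $k$-th equation with $\Phi(v_{k+1})$ (legitimate since $\Phi(v_{k+1})\in\sob$) gives
\[
	\frac1h\int_\Omega (v_{k+1}-v_k)\Phi(v_{k+1})\,dx + [\Phi(v_{k+1})]_s^2 = \alpha\int_\Omega v_{k+1}\Phi(v_{k+1})\,dx\,.
\]
The crucial pointwise inequality is that, writing $F(v) = \frac{m+1}{m(m-1)}|v|^{\frac{m+1}{m}\cdot m}$—more precisely choosing $F$ with $F'(v) = \Phi(v)$ and $G$ with $G'(v) = v\Phi(v)'$—one has $(v_{k+1}-v_k)\Phi(v_{k+1}) \ge F(\Phi(v_{k+1})) \cdot(\ldots)$; concretely, by convexity of the primitive $H$ of $s\mapsto \Phi^{-1}(s)$ one gets $(a-b)\Phi(a) = (\Phi(a)-\Phi(b))\Phi^{-1}(\Phi(a))\ge H(\Phi(a))-H(\Phi(b))$ where $H(\sigma) = \frac{m}{m+1}|\sigma|^{\frac{m+1}{m}}$, i.e. $H\circ\Phi = \frac{m}{m+1}|v|^{m+1}$. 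Similarly the right-hand term $\alpha v\Phi(v) = \alpha|v|^{m+1} = \frac{\alpha}{q}\cdot\frac{m+1}{m}|\Phi(v)|^q$ with $q=(m+1)/m$, which is exactly $\frac{m+1}{m}$ times the potential term in $\lyapno$. Rearranging, each step yields
\[
	\lyapm{q}{\Phi(v_{k+1})} + c(m)\cdot\frac1h\int_\Omega\big(\text{something}\ge 0\big)\,dx \le \lyapm{q}{\Phi(v_k)}\,,
\]
and the dissipation term must be identified from below, via a further elementary inequality, with $c(m)\,h\int_\Omega|\tfrac{g(v_{k+1})-g(v_k)}{h}|^2\,dx$ where $g(v) = |v|^{(m-1)/2}v$; this last bound is the algebraic heart and uses $(a-b)(\Phi(a)-\Phi(b))\ge c(m)(g(a)-g(b))^2$, a standard scalar inequality for power nonlinearities. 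Summing over $k$ telescopes to the discrete version of \eqref{EED}.

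\textbf{Passage to the limit.} From the discrete estimate one gets: $\lyapno(\Phi(v^{(h)}))$ bounded, hence by Lemma~\ref{lm:coerc} a uniform bound on $[\Phi(v^{(h)})]_s$ in $L^\infty(0,T;\sob)$; a uniform bound on $\partial_t g(v^{(h)})$ (the piecewise-linear interpolant's derivative) in $L^2(Q_T)$; and, using the equation, a bound on $\partial_t v^{(h)}$ in $L^2(0,T;(\sob)^\ast)$ or similar. By Aubin--Lions one extracts a subsequence with $\Phi(v^{(h)})\to\Phi(v)$ strongly in $L^2(0,T;L^2)$ and weakly in $L^2(0,T;\sob)$, $v^{(h)}\to v$ in $C([0,T];L^{m+1})$ after interpolation; the monotonicity of $\Phi$ (or continuity and the strong convergence of $g(v^{(h)})$) identifies the limit of the nonlinear terms, so $v$ is a weak solution. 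Uniqueness follows from the standard Oleinik-type $L^1$-contraction argument for \eqref{FPMEs} (the linear $\alpha v$ term only produces a Gronwall factor $e^{\alpha t}$), which also makes the whole sequence converge. Finally \eqref{EED} passes to the limit: the energy term by weak lower semicontinuity of $[\,\cdot\,]_s^2$ and strong $L^q$ convergence, and the dissipation term $\int_0^t\int_\Omega|\partial_t g(v)|^2$ by weak lower semicontinuity of the $L^2$ norm against the $L^2$-weak convergence of $\partial_t g(v^{(h)})$.

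\textbf{Main obstacle.} I expect the principal difficulty to be the \emph{rigorous identification of the discrete dissipation term} with $C_0\int_\Omega|\partial_t g(v)|^2$: one needs the two scalar inequalities (the convexity bound turning $(v_{k+1}-v_k)\Phi(v_{k+1})$ into an increment of energy, and the bound $(a-b)(\Phi(a)-\Phi(b))\ge c(m)(g(a)-g(b))^2$) to be compatible, and then to carry the lower bound through the limit—since it is a \emph{lower} bound one only loses by lower semicontinuity, which is fine, but matching constants and making sure the piecewise-linear versus piecewise-constant interpolants are reconciled (their difference is $O(h)$ in the right norm) requires care. A secondary technical point is the well-posedness of each elliptic step in the \emph{signed} case, where one cannot invoke maximum-principle arguments and must rely purely on the monotone/variational structure.
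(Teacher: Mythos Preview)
Your overall strategy—implicit Euler discretization, discrete entropy estimate, passage to the limit—is exactly the paper's. But the derivation of the discrete energy inequality has a real gap.

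You propose to test the step equation with $\Phi(v_{k+1})$. This yields
\[
\frac{1}{h}\int_\Omega (v_{k+1}-v_k)\Phi(v_{k+1})\,dx + [\Phi(v_{k+1})]_s^2 = \alpha\int_\Omega |v_{k+1}|^{m+1}\,dx\,,
\]
which only sees the Dirichlet seminorm at level $k+1$; it cannot, by itself, produce the difference $\lyapm{q}{\Phi(v_{k+1})}-\lyapm{q}{\Phi(v_k)}$. The line ``$(a-b)\Phi(a)=(\Phi(a)-\Phi(b))\Phi^{-1}(\Phi(a))$'' is simply false, and the convexity bound you write afterwards, $(v_{k+1}-v_k)\Phi(v_{k+1})\ge H(\Phi(v_{k+1}))-H(\Phi(v_k))$, gives an estimate on the increment of $\int|v|^{m+1}$, not on the energy. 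If instead you test with $\Phi(v_{k+1})-\Phi(v_k)$, the seminorm term works out, but the lower-order term produces a Bregman remainder $\alpha\int D_H(\Phi(v_k),\Phi(v_{k+1}))\ge0$ on the \emph{wrong} side, and an additional argument would be needed to absorb it.

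The paper avoids all this by defining $v_k$ \emph{variationally}, as a minimizer of
\[
v\longmapsto \lyapm{q}{\Phi(v)} + \tfrac{1}{h}\,\varphi(v,v_{k-1})\,,\qquad
\varphi(v,f)=\tfrac{1}{m+1}\!\int_\Omega(|f|^{m+1}-|v|^{m+1})\,dx+\int_\Omega\Phi(v)(v-f)\,dx\,,
\]
so that $\varphi(\,\cdot\,,v_{k-1})$ is precisely the Bregman divergence of $F(t)=\tfrac{1}{m+1}|t|^{m+1}$ and the Euler--Lagrange equation is the implicit Euler step. Then the energy inequality
\[
\lyapm{q}{\Phi(v_k)}+\tfrac{1}{h}\varphi(v_k,v_{k-1})\le \lyapm{q}{\Phi(v_{k-1})}
\]
is immediate from comparing with the competitor $v_{k-1}$. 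The scalar inequality that feeds the dissipation is then the Bregman-type bound (Lemma~\ref{lm:A2})
\[
F(a)-F(b)-F'(b)(a-b)\ \ge\ C_0(m)\,|g(a)-g(b)|^2\,,
\]
not the monotonicity inequality $(a-b)(\Phi(a)-\Phi(b))\ge c(m)(g(a)-g(b))^2$ you quote; the two are related but the former is what matches $\varphi$.

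A minor difference: for compactness the paper works with backward Steklov averages of $g(\bar v_h)$ and applies Fr\'echet--Kolmogorov plus Arzel\`a--Ascoli to land in $C([0,T];L^2(\Omega))$, rather than Aubin--Lions. Your route via Aubin--Lions is plausible once the discrete estimate is in hand, but note that your bound on $\partial_t v^{(h)}$ in $L^2(0,T;(\sob)^\ast)$ would have to be extracted carefully, whereas the $L^2(Q_T)$ bound on $\partial_t g(v^{(h)})$ (which you do have) is exactly what the paper exploits.
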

\begin{proof}
We fix $T>0$. 
We shall prove the existence of a weak solution
$v\in C([0,T];L^{m+1}(\Omega))$ of \eqref{FPMEs}, with $v(0)=u_0$, 
such that the estimate \eqref{EED} is valid for all $0\le t\le T$. Its uniqueness follows by that of {\em weak energy solutions} of the homogeneous equation \eqref{FPMEintro}. Indeed, if $v$ solves \eqref{FPMEs}, then 
$$u(x,t)=(1+t)^{-\alpha}v(x,\log(1+t))$$ solves \eqref{FPMEintro}, and this solution can be seen to
be unique by adapting a method due to
Ole\u{\i}nik, Kala\v{s}nikov, and \v{C}\v{z}ou that consists in inserting a specific test function 
in the weak formulation, so as to obtain an a priori identity implying that the difference between two weak solutions must be zero.
That is done in~\cite[Theorem~6.1]{DQRV} in the case $\Omega=\mathbb R^N$; adapting
the proof to the case of a bounded domain is straightforward.

To prove the existence of a solution with the energy estimate, we fix $h>0$ and 
for all integers $k$ from $1$ to the integer part $\left\lfloor T/h\right\rfloor$ of $T/h$
we define recursively $v_k$ as a solution of the minimization
\begin{equation}
\label{defukh}
	\min \left\{
		\lyapm{\frac{m+1}{m}}{\Phi(v)} + \frac{1}{h}\varphi(v\mathbin{,} v_{k-1})\mathbin{\colon}
		v\in \sob
	\right\}\,,
\end{equation}
where
\begin{equation}
\label{defvarphi}
	\varphi(u,f) = \frac{1}{m+1}\int_\Omega (|f|^{m+1}-|v|^{m+1})\,dx + \int_\Omega \Phi(v)(v-f)\,dx\,.
\end{equation}
Note that $\varphi(u,f)\ge0$, with equality only if $u=f$. 
The Euler-Lagrange equation for \eqref{defukh} is
\begin{equation}\label{Eulerlagr}
(-\Delta)^{s}\Phi(v_{k})+\frac{1}{h}(v_{k}-v_{k-1})=\alpha v_{k}\,,
\end{equation}
{\em i.e.}, the discretized version of \eqref{FPMEs}.
Then, we construct $\bar v_h\colon Q_T\to\mathbb R$ by setting 
\begin{equation}
\label{defubarh}
	\bar v_h (x,t) = v_{\left\lfloor t/h\right\rfloor}(x)\,,
\end{equation}
for all $(x,t)\in Q_T$. The minimality of $v_k$ for the problem \eqref{defukh} implies
\begin{equation}
\label{ee1}
		\lyapm{\frac{m+1}{m}}{\Phi(v_k)} + \frac{1}{h}\varphi(v_k\mathbin{,}v_{k-1})\le	\lyapm{\frac{m+1}{m}}{\Phi(v_{k-1})}\,.
\end{equation}
By Lemma~\ref{lm:A2} with $a=u_{k-1} $ and $b=u_k$, there is a constant $C_0(m)$ depending only on $m$ with
\begin{equation}
\label{ee1.5}
\frac{1}{h} \varphi(v_k\mathbin{,}v_{k-1}) \ge C_0(m) h \int_\Omega \left\vert \frac{g(v_k)-g(v_{k-1})}{h}\right\vert^2\,dx\,.
\end{equation}
Besides \eqref{defubarh}, we define $\bar v_h$ for negative times by setting $\bar v_h(\cdot,t)= u_0$, for all $t<0$,
and we denote by $\widehat G_h$ the backward Steklov average %
of $g\circ \bar v_h$, namely
\[
	\widehat G_h(x,t)=\frac{1}{h}\int_{t-h}^tg(\bar v_h(x,\tau))\,d\tau\,,\qquad \text{for all $(x,t)\in Q_T$.}
\]
Then, by inserting \eqref{ee1.5} in \eqref{ee1} and summing up, we arrive at the energy estimate
\begin{equation}
\label{ee2}
	\lyapm{\frac{m+1}{m}}{\Phi(\bar v_h(\cdot,t))} + 
	 C_0(m)\iint_{Q_t}\! \!\left\vert\partial_t \widehat G_h(x,\tau)\right\vert^2\!\!dx\,d\tau
	\le	\lyapm{\frac{m+1}{m}}{\Phi(u_{0})},\ \text{for all $0\le t\le T$.}
\end{equation}

 Incidentally, by Fubini theorem and Jensen's inequality we have
\[
\begin{split}
	\iint_{Q_T} \big(\widehat G_h -g(\bar v_h)\big)^2\,dx\,dt
	&\le \frac1h \int_0^T\int_{t-h}^t \int_\Omega \big( g(\bar v_h(x,\tau)-g(\bar v_h(x,t))\big)^2\,dx\,d\tau\,dt\\
	&=\sum_{k=1}^{\left\lfloor T/h\right\rfloor}\int_\Omega \big( g(u_k)-g(u_{k-1})\big)^2\,.
\end{split}
\]
Thus, owing to the definition of $\bar v_h$, and by using \eqref{ee1.5} backward, we see that
\begin{equation}
\label{ee2.5}
	\iint_{Q_T} \big(\widehat G_h -g(\bar v_h)\big)^2\,dx\,dt\le C_0^{-1} h \ \Big[ 	\lyapm{\frac{m+1}{m}}{\Phi(u_{0})}+C\Big]
\end{equation}
where $C>0$ is the constant of Lemma~\ref{lm:coerc}.
\medskip

We will make use of Eq.\ \eqref{ee2.5} below. But before doing so, we first aim at proving that
\begin{equation}
\label{preaa}
\text{$\{ \widehat G_h \mathbin{:} h>0 \}$ is relatively compact in $C([0,T];L^2(\Omega))$.}
\end{equation}
To do so, we argue similarly as done in the proof of  \cite[Proposition~5.3,~Step~4]{bravol}. We first 
note that
\[
	\int_\Omega |\widehat G_h(x,t_1)-\widehat G_h(x,t_2)|^2\,dx
	\le (t_2-t_1)\int_{t_1}^{t_2} \int_\Omega |\partial_t \widehat G_h(x,\tau)|^2\,dx\,d\tau
\]	
and, thanks to \eqref{ee2} and Lemma~\ref{lm:coerc}, we also obtain
\[
(t_2-t_1)\int_{t_1}^{t_2} \int_\Omega |\partial_t \widehat G_h(x,\tau)|^2\,dx\,d\tau
	\le C_0^{-1}|t_2-t_1| \Big[\lyapm{\frac{m+1}{m}}{\Phi(u_{0})} +C\Big]\,.
\]
Putting the last two estimates together entails that
\begin{equation}
\label{preaa1}
\text{$\{ \widehat G_h :h>0\} $ is an equicontinuous family of
$L^{2}(\Omega)$-valued curves.}
\end{equation}

Then, we fix $0\le t\le T $ and $z\in\mathbb R^N$, and we use Jensen's inequality to get
\[
	\int_{\R^{N}} \left|\widehat G_h(x+z,t)-\widehat G_h(x,t)\right|^\frac{4m}{m+1}\,dx\le
	\frac{1}{h}\int_{t-h}^{t}\int_{\R^{N}} \left| g(\bar{v}_{h}(x+z,\tau))-g(\bar{v}_{h}(x,\tau))\right|^\frac{4m}{m+1}dx\,d\tau\,.
\]
In view of the elementary inequality $|g(A)-g(B)|^\frac{4m}{m+1}\le C_1(m)|\Phi(A)-\Phi(B)|^2$ and of~\cite[Lemma~A.1]{BLP}
\[
\int_{\R^{N}} \left| g(\bar{v}_{h}(x+z,\tau))-g(\bar{v}_{h}(x,\tau))\right|^\frac{4m}{m+1}dx\le C_2(m,N)
|z|^{2s} 
[\Phi(\bar v_h(\cdot,\tau))]^2\,,\qquad \text{for all $t-h<\tau<t$.}
\]
Inserting this inequality into the previous one and using H\"older inequality to handle the result gives
\[
	\int_{\R^{N}} \left|\widehat G_h(x+z,t)-\widehat G_h(x,t)\right|^2\,dx\le
	|z|^{\frac{(m+1)s}{m}}\cdot\left[C_3(m,N)\ |\Omega|^{\frac{m-1}{2m}}\ \frac1h \int_{t-h}^t[\Phi(\bar v_h(\cdot,\tau))]_{W_{0}^{s,2}(\Omega)}^\frac{m+1}{m}\,d\tau\right]\,.
\]
Also, by Lemma~\ref{lm:coerc} and by Eq.\ \eqref{ee1} we have
\[
 \int_{t-h}^t[\Phi(\bar v_h(\cdot,\tau))]_{W_{0}^{s,2}(\Omega)}^\frac{m+1}{m}\,d\tau \le h\cdot C_4(s,m,\Omega)\ \left[ 
 	\lyapm{\frac{m+1}{m}}{\Phi(u_0)}+1\right]^{\frac{m+1}{2m}}\,.
\]
\begin{subequations}\label{preaa2}
Recalling that $t\ge0$ and $z\in\mathbb R^N$ were arbitrary, the last two inequalities entail that
\begin{equation}
\label{preaa2.a}
\lim_{z\to0} \sup_{h>0} 	\int_{\R^{N}} \left|\widehat G_h(x+z,t)-\widehat G_h(x,t)\right|^2\,dx=0\,,\qquad \text{for all $0\le t\le T$.}
\end{equation}
Since $g(\sigma)^2=|\Phi(\sigma)|^\frac{m+1}{m}$, by the fractional Sobolev embedding into $L^{\frac{m+1}{m}}(\Omega)$ 
we have
\begin{equation*}
\int_\Omega |\widehat G_h(x,t)|^2\,dx \le \frac1h\int_{t-h}^t \int_\Omega |g(\bar v_h(x,\tau))|^2\,dx\,d\tau\le
  \frac{C}{h}\int_{t-h}^t [\Phi(\bar v_h(\cdot,\tau))]^2\,d\tau
\end{equation*}
and thence, by arguing as done above, we infer that
\begin{equation}
\label{preaa2.b}
	\sup_{h>0} \int_\Omega |\widehat G_h(x,t)|^2\,dx<+\infty\,,\qquad\text{for all $0\le t\le T$.}
\end{equation}
\end{subequations}
By Fr\'echet-Kolmogorov theorem, \eqref{preaa2} implies that
\begin{equation}
\label{preaa3}
\text{$	\{ \widehat G_h(\cdot,t)\mathbin{\colon} h>0\}$ is relatively compact in $L^2(\Omega)$, for all $0\le t\le T$.}
\end{equation}
Thanks to the vector-valued extension of Ascoli-Arzelà theorem~\cite[Lemma~1]{Si}, from
\eqref{preaa1} and \eqref{preaa3} we can infer \eqref{preaa}. 

\begin{subequations}\label{ee3}
Then, there exist
a sequence $h_j\to0^+$ and a function $v$, with $g(v)\in C([0,T];L^2(\Omega))$, such that

\begin{equation}
\label{postaa1}
	\text{$\widehat G_{h_j} \to g(v) $ \qquad in $C([0,T];L^2(\Omega))$.}
\end{equation}
Clearly, the convergence \eqref{postaa1} is strong in $L^2(Q_T)$, too. Hence, recalling \eqref{ee2.5}, we deduce that
\begin{equation}
\label{postaa2}
	\text{$ g(\bar v_{h_j}) \to g(v) $ \qquad strongly in $L^2([0,T];L^2(\Omega))$.}
\end{equation}
Then, by Ineq.\ (A.2) in~\cite[Lemma A.1]{bravol}, it follows that
\begin{equation}
\label{postaa3}
\text{$ \bar v_{h_j} \to v $ \qquad strongly in $L^{m+1}([0,T];L^{m+1}(\Omega))$.}
\end{equation}
Also, by possibly passing to a subsequence, from Lemma~\ref{lm:coerc} and Eq.\ \eqref{ee1} we may infer that
\begin{equation}
\label{postaa4}
\text{$ \Phi(\bar v_{h_j}) \to \Phi(v) $ \qquad weakly in $L^2(0,T;\sob)$.}
\end{equation}
Moreover, we note that $(\partial_t \widehat G_{h_j})_j$ is a bounded sequence in $L^2(Q_T)$ because of estimate \eqref{ee2}. Thus,
in view of \eqref{postaa1}, up to passing to a further subsequence, we may write that
\begin{equation}
\label{postaa5}
\text{$ \partial_t \widehat G_{h_j} \to \partial_t g(v) $ \qquad weakly in $L^2(0,T;L^2(\Omega))$.}
\end{equation}
\end{subequations}

By lower semicontinuity,
\eqref{ee2} and \eqref{ee3} imply \eqref{EED}.
Since $\widehat G_{h_j}(0)=g(u_0)$ for all $j$, by \eqref{postaa1} we also have that $v(0)=u_0$.
Then, in order to conclude we are left to prove that $v$ is a weak solution of \eqref{FPMEs} in $Q_T$.
To see this, we first deduce \eqref{asspt:enweak} from \eqref{postaa1}, thanks to~\cite[Lemma A.1]{bravol}.
To prove that \eqref{FMPEsW} holds too, we use the Euler-Lagrange equation \eqref{Eulerlagr} for $v_k$ and \eqref{defubarh} to get
\[
\begin{split}
		\iint_{Q_T}   & \frac{\bar v_{h_j}(x,t)-\bar v_{h_j}(x,t-h_j)}{h_j}\ \eta(x,t)\,dx\,dt   \\
		& +
	\int_0^T\int_{\mathbb R^N}\int_{\mathbb R^N}
	\frac{\Phi(\bar v_{h_j}(x,t))-\Phi(\bar v_{h_j}(y,t))}{|x-y|^{N+2s}}(\eta(x,t)-\eta(y,t))\,dx\,dy\,dt=\alpha\iint_{Q_T} \bar v_{h_j} \eta\,,
\end{split}
\]
for all $\eta\in C^\infty_c(Q_{T})$, and changing variables 
yields
\[
-\iint_{Q_T}  \bar v_{h_j}\ \partial_t\widehat\eta^{h_j} +\int_0^T\!\!\iint_{\mathbb R^{2N}}\!\!
	\frac{\Phi(\bar v_{h_j}(x,t))-\Phi(\bar v_{h_j}(y,t))}{|x-y|^{N+2s}}(\eta(x,t)-\eta(y,t))\,dx\,dy\,dt=\alpha\iint_{Q_T} \bar v_{h_j} \eta\,.
\]
Thanks to \eqref{ee3}, by passing to the limit in the latter we obtain Eq.\ \eqref{FMPEsW} and we conclude.
\end{proof}

\subsection{Stabilization} We characterise the cluster points of large time asymptotic profiles of weak solutions, understood 
as in the following definition. 

\begin{definition}
Let $u_0\in L^{m+1}(\Omega)$, with $\Phi(u_0)\in \sob$. Then, the {\em $\omega$-limit emanating from $u_0$} is the set
\[
	\omega(u_0)=\Big\{
	f\in L^{m+1}(\Omega) \mathbin{\colon}
	 \text{there exists $(t_j)_j\nearrow+\infty$ with $\displaystyle\lim_{j\to\infty}\|v(\cdot,t_j)-u_0\|_{L^{m+1}(\Omega)}=0$}
	\Big\}
\]
where $v\in C([0,\infty);L^{m+1}(\Omega))$ is the weak solution of \eqref{FPMEs} with $v(0)=u_0$.
\end{definition}

The structure of $\omega(u_0)$ is easier to understand under the assumptions 
\begin{equation}
\label{hp-omega}
\partial_tg(v)\in L^2([T_0,+\infty),L^2(\Omega))\,,\qquad \Phi(v)\in L^\infty( [T_0,+\infty),\sob)
\end{equation}
on the weak solution $v$ of \eqref{FPMEs} with initial datum $u_0$, for an appropriate time $T_0>0$. These assumptions
are the non-local counterpart of  those considered in~\cite{bravol} for the local case.

\begin{theorem}\label{omega-lim-char}
Let  $v$ 
be the weak solution of \eqref{FPMEs} and assume that there exists $T_0>0$ for which \eqref{hp-omega}
holds.
 Then, for every $U\in\omega(u_0)$,
the function $\Phi(U)$ belongs to $\sob$ and is a weak solution of \eqref{LEeq}.
\end{theorem}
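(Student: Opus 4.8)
The plan is to exploit the entropy–entropy dissipation inequality \eqref{EED} together with the extra regularity \eqref{hp-omega} to show that the time derivative $\partial_t g(v)$ tends to zero along a suitable sequence, and then to pass to the limit in the weak formulation \eqref{FMPEsW}. First I would fix $U\in\omega(u_0)$ and a sequence $t_j\nearrow+\infty$ with $v(\cdot,t_j)\to U$ in $L^{m+1}(\Omega)$. By \eqref{hp-omega}, $\Phi(v(\cdot,t))$ is bounded in $\sob$ for $t\ge T_0$; hence by the compact embedding into $L^q(\Omega)$ (with $q=(m+1)/m$) and, via \cite[Lemma~A.1]{bravol}, in $L^{m+1}(\Omega)$, after extracting a subsequence $v(\cdot,t_j)$ converges weakly in the sense that $\Phi(v(\cdot,t_j))\rightharpoonup \Phi(U)$ weakly in $\sob$ (the weak limit being identified with $\Phi(U)$ because of the strong $L^{m+1}$ convergence to $U$ and the continuity of $\Phi^{-1}$). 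In particular $\Phi(U)\in\sob$.

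Next I would produce vanishing of the dissipation. Since by \eqref{hp-omega} $\partial_t g(v)\in L^2([T_0,\infty);L^2(\Omega))$, the tails $\int_{t}^{t+1}\int_\Omega|\partial_t g(v)|^2\,dx\,d\tau\to0$ as $t\to\infty$. Looking at the rescaled equation on the shifted time windows $[t_j,t_j+1]$, define $v_j(x,\tau)=v(x,t_j+\tau)$ for $\tau\in[0,1]$. Then $\partial_\tau g(v_j)\to0$ strongly in $L^2([0,1];L^2(\Omega))$, while $\Phi(v_j)$ is bounded in $L^\infty([0,1];\sob)$ and $v_j$ is bounded in $L^\infty([0,1];L^{m+1}(\Omega))$. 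From $\partial_\tau g(v_j)\to0$ in $L^2$ and equi-boundedness one deduces (again through \cite[Lemma~A.1]{bravol} relating $g$, $\Phi$ and the $L^{m+1}$ norm) that $v_j(\cdot,\tau)$ stays close to $v_j(\cdot,0)=v(\cdot,t_j)\to U$ uniformly in $\tau$; hence $v_j\to U$ (constant in $\tau$) strongly in $L^{m+1}([0,1]\times\Omega)$ and $\Phi(v_j)\rightharpoonup\Phi(U)$ weakly in $L^2([0,1];\sob)$.

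Then I would pass to the limit in the weak formulation of \eqref{FPMEs} restricted to $\Omega\times(t_j,t_j+1)$ tested against functions of the form $\eta(x,\tau)=\zeta(\tau)\psi(x)$, with $\zeta\in C_c^\infty(0,1)$ and $\psi\in C_c^\infty(\Omega)$: the parabolic term $-\iint v_j\,\partial_\tau\eta$ equals $\iint \partial_\tau v_j\,\eta$, which I control by writing $\partial_\tau v_j$ in terms of $\partial_\tau g(v_j)$ — more precisely using that $g'(v)=\tfrac{m+1}{2}|v|^{(m-1)/2}$ so that, on the set where this is bounded below, $\partial_\tau v_j$ is dominated by $\partial_\tau g(v_j)$, and handling the degeneracy near $v=0$ by the standard trick of integrating the identity $\partial_\tau v_j=\tfrac{2}{m+1}|v_j|^{(1-m)/2}\partial_\tau g(v_j)$ against $\eta$ after noticing that the genuinely singular contribution is absorbed — alternatively, and more cleanly, I would test \eqref{FMPEsW} directly against $\eta=\zeta(\tau)\psi(x)$ on $(t_j,t_j+1)$ and use $\iint v_j\,\partial_\tau\eta\to (\int_0^1\zeta'\,d\tau)\int_\Omega U\psi\,dx=0$ since $v_j\to U$ in $L^{m+1}$ and $\int_0^1\zeta'=0$. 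The diffusion term converges by the weak $L^2([0,1];\sob)$ convergence of $\Phi(v_j)$ tested against the fixed $\zeta(\tau)\psi(x)$, and the reaction term $\alpha\iint v_j\eta$ converges by strong $L^{m+1}$ convergence. Dividing by $\int_0^1\zeta\,d\tau\neq0$ yields, for every $\psi\in C_c^\infty(\Omega)$,
\[
\int_{\RN}\int_{\RN} \interr{\Phi(U)}{\psi}\,dx\,dy = \alpha\int_\Omega |U|^{m-1}U\,\psi\,dx = \alpha\int_\Omega |\Phi(U)|^{q-2}\Phi(U)\,\psi\,dx\,,
\]
since $|U|^{m-1}U=|\Phi(U)|^{q-2}\Phi(U)$ with $q=(m+1)/m$; a density argument extends this to all $\psi\in\sob$, which is exactly \eqref{LEweak}, i.e.\ $\Phi(U)$ is a weak solution of \eqref{LEeq}.

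The main obstacle I expect is the bookkeeping that converts the strong $L^2$ smallness of $\partial_\tau g(v_j)$ into strong $L^{m+1}$ control of $v_j$ uniformly on the time window, keeping track of the degenerate diffusivity $g'$ near $v=0$; this is where \cite[Lemma~A.1]{bravol} (the elementary inequalities comparing $|g(a)-g(b)|$, $|\Phi(a)-\Phi(b)|$ and $|a-b|$) does the essential work, and it has to be applied carefully since $q<2$ means $m>1$ and the exponents are slightly awkward. The weak convergence $\Phi(v_j)\rightharpoonup\Phi(U)$ in $\sob$ and the identification of the limit is routine once the strong $L^{m+1}$ convergence is in hand, and passing to the limit in the three terms of \eqref{FMPEsW} against a fixed product test function is then immediate.
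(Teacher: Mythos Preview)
Your proposal is correct and follows essentially the same strategy as the paper: shift time by $t_j$, test the weak formulation against a product $\zeta(\tau)\psi(x)$ with $\zeta$ compactly supported in the time window, use that $\int\zeta'=0$ kills the parabolic term in the limit, and divide by $\int\zeta$. The paper simply cites \cite[Theorem~5.2]{bravol} for the fact that $v_j\to U$ in $L^{m+1}$ on the shifted window, whereas you sketch this directly from the vanishing of $\partial_\tau g(v_j)$ in $L^2$ together with the elementary inequality $|a-b|^{m+1}\le C_m|g(a)-g(b)|^2$; and for the diffusion term the paper rewrites the double integral as $\int_\Omega\Phi(v_j)(-\Delta)^s\psi\,dx$ (using that $\psi$ is smooth) so that only strong $L^q$ convergence of $\Phi(v_j)$ is needed, while you pass to the limit via weak $L^2([0,1];\sob)$ convergence of $\Phi(v_j)$---both are fine and equally short.
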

\begin{proof}
By repeating verbatim the proof of~\cite[Theorem 5.2]{bravol}, we can see that the assumptions \eqref{hp-omega} imply
the first statement and, also, we arrive at
\begin{equation}
\label{bravol5.2}
\lim_{j\to\infty}\| v_j-U\|_{L^{m+1}(\mathcal Q)} =0\,,\quad \text{where $\mathcal{Q}=\Omega\times(-1,1)$,}
\end{equation}
and, for all $j\in\mathbb N$, we set $v_j(x,t)=v(x,t+t_j)$,
for all $(x,t)\in \mathcal{Q}$. 

In order to prove also that $\Phi(U)$ is a weak solution of \eqref{LEeq},
we follow~\cite{bravol}, again: we take $\rho\in C_0^\infty(-1,1)$ and $\psi\in C^\infty_0(\Omega)$, and we test Eq.\ \eqref{FMPEsW} with $\eta(x,t)=\rho(t-t_j)\psi(x)$,
so as to get
\[
\begin{split}
-\int_{-1+t_j}^{1+t_j}\int_{\Omega} v \psi \rho'(t-t_j)\,dx\,dt & +\int_{-1+t_j}^{1+t_j} \iint_{\mathbb R^{2N}}\!\!\!\! \frac{(\Phi(v(x,t))-\Phi(v(y,t)))(\psi(x)-\psi(y))}{|x-y|^{N+2s}}\,dx\,dy
\rho(t-t_j)\,dt
\\ & =\alpha\int_{-1+t_j}^{1+t_j}\int_{\Omega} v\psi\rho(t-t_j)\,dx\,dt\,.
\end{split}
\]
A change of variable in the time integral yields
\[
\begin{split}
-\iint_{\mathcal{Q}}\!\! v_j \psi \rho'\,dx\,dt & +\int_{-1}^1 \iint_{\mathbb R^{2N}}\!\! \frac{(\Phi(v_j(x,t))-\Phi(v_j(y,t)))(\psi(x)-\psi(y))}{|x-y|^{N+2s}}\,dx\,dy
\,
\rho(t)\,dt \\ & =\alpha\iint_{\mathcal{Q}} v_j\psi\rho\,dx\,dt\,.
\end{split}
\]
In view of the definition of the $s$-laplacian of the smooth function $\psi$, the latter implies
\[
\begin{split}
-\iint_{\mathcal{Q}}\!\! v_j \psi \rho'\,dx\,dt & +
\iint_{\mathcal{Q}} \Phi(v_j) (-\Delta)^s\psi\ \rho\,dt  =\alpha\iint_{\mathcal{Q}} v_j\psi\rho\,dx\,dt\,.
\end{split}
\]
Owing to~\eqref{bravol5.2}, taking the limits yields
\[
	\begin{split}
-\int_{-1}^1 \left(\int_{\Omega} U\psi\,dx\right) \rho'\,dt & +
\int_{-1}^1\left(\int_{\Omega} \Phi(U) (-\Delta)^s\psi\,dx\right) \rho\,dt  =\alpha\int_{-1}^1\left(\int_{\Omega} U\psi\,dx\right)\rho\,dt\,.
\end{split}
\]
Since $\rho$ vanishes at the endpoints of the interval $(-1,1)$, it follows that
\[
\left[\int_{\Omega}\left( \Phi(U) (-\Delta)^s\psi- \alpha U\psi\right)\,dx\right]\cdot\int_{-1}^1\rho\,dt=0\,.
\]
As $\rho$ can be any element of $C^\infty_0(-1,1)$, we can choose is so as to make the time integral different from zero. Thus, recalling again the definition of $(-\Delta)^s\psi$ we deduce
that \eqref{LEweak} holds with $u=\Phi(U)$, as desired.
\end{proof}

\section{Paths of controlled energy}

\begin{proposition}\label{prop:encontr}
Let $m>1$ and $\alpha>0$, let $u_0\in L^{m+1}(\Omega)$, with $\Phi(u_0)\in \sob$.
Assume that either \eqref{assnl1} or \eqref{assnl2} holds, and set $q=(m+1)/m$.
Then, there exists $\theta\in C([0,1]\mathbin{;} \sob)$ for which
\begin{itemize}
\item[$(i)$]$\theta(\cdot,0)$ is the positive minimizer $w$ of $\lyapno$, 
\item[$(ii)$] $\theta(\cdot,1) = \Phi(u_0)$, and
\item[$(iii)$]   $\lyap{\theta(\cdot,t)}<\Lambda_2$ for all $t\in(0,1)$.
\end{itemize}
\end{proposition}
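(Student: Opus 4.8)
The plan is to build the path $\theta$ explicitly by concatenating two or three elementary homotopies, using the ``hidden convexity'' of Gagliardo's seminorm as the main tool to keep the energy below $\Lambda_2$. The starting point is the decomposition $\Phi(u_0) = \Phi(u_0)^+ - \Phi(u_0)^-$ (note $\Phi(u_0)^\pm = \Phi(u_0^\pm)$ since $\Phi$ is odd and increasing). First I would reduce the problem to connecting $w$ to $\Phi(u_0^+)$ (the nonnegative part) and then $\Phi(u_0^+)$ to $\Phi(u_0)$; the latter segment is the linear interpolation $t\mapsto \Phi(u_0^+) - t\,\Phi(u_0^-)$, or rather a reparametrisation thereof, along which one must check the energy stays below $\Lambda_2$. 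Expanding $\lyap{\Phi(u_0^+) - t\Phi(u_0^-)}$ and using that the Gagliardo interaction term between a nonnegative and a nonpositive function has a sign, one gets
\[
\lyap{\Phi(u_0^+) - t\Phi(u_0^-)} \le \lyap{\Phi(u_0^+)} + t^2\lyap{\Phi(u_0^-)} + 2t\iint\frac{\Phi(u_0)^+(x)\,\Phi(u_0)^-(y)}{|x-y|^{N+2s}}\,dx\,dy,
\]
so under \eqref{assnl2} (where $\lyap{\Phi(u_0^-)}\le 0$) the right side is dominated by $\lyap{\Phi(u_0^+)} + 2t(\cdots)$, increasing in $t$, hence bounded by its value at $t=1$, which is $<\Lambda_2$ by hypothesis; under \eqref{assnl1} one instead keeps the $t^2$ term and argues by convexity/monotonicity along the segment together with $\lyap{\Phi(u_0)}<\Lambda_2$.

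Next I would connect $w$ to $\Phi(u_0^+)$. The key idea, as in~\cite{bravol}, is to use the change of variables that linearises the problem in the $\varphi$ (not $u$) variable and exploit that for nonnegative functions the map $\varphi\mapsto [\,\varphi^{1/\cdot}\,]$-type quantity is convex; concretely, one interpolates \emph{inside} $\Phi$, i.e. along $u$, so that $\Phi$ of the interpolant has controlled seminorm. I would write the path as $\sigma\mapsto \Phi\big((1-\sigma) w^{q-1} + \sigma u_0^+\big)$ after suitable normalisation, or better, use the one-parameter family scaling the positive minimiser, $\tau\mapsto \tau\, w$ for $\tau$ from the value giving $w$ down to a small value, then a path through low-energy nonnegative functions, then back up to $\Phi(u_0^+)$; the point is that along rays $\tau\mapsto \tau\varphi$ the energy is $\frac{\tau^2}{2}[\varphi]_s^2 - \frac{\alpha\tau^q}{q}\|\varphi\|_q^q$, which for $\varphi$ fixed stays below $\max\{0,\lyap{\varphi}\}$ and in particular below $\Lambda_2$ whenever $\lyap{\varphi}<\Lambda_2$, by a one-variable calculus check (the function $\tau\mapsto \lyap{\tau\varphi}$ has a single interior maximum). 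So the segment from $w$ through $0^+$ (staying among scalings and thus at energy $\le 0 < \Lambda_2$) reaches small nonnegative functions, and then a scaling up along the ray through $\Phi(u_0^+)$ reaches $\Phi(u_0^+)$ while the energy is bounded by $\max\{0, \lyap{\Phi(u_0^+)}\}$; under either \eqref{assnl1} or \eqref{assnl2} one verifies $\lyap{\Phi(u_0^+)} < \Lambda_2$ (this needs a short argument: in case \eqref{assnl1}, $\lyap{\Phi(u_0^+)} \le \lyap{\Phi(u_0)} - \lyap{\Phi(u_0^-)} + (\text{sign-definite cross term}) \le \lyap{\Phi(u_0)} < \Lambda_2$ using $\lyap{\Phi(u_0^-)}>0$; in case \eqref{assnl2} it is immediate from the displayed inequality with $t=1$).

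Then I would concatenate: the path goes $w \rightsquigarrow 0 \rightsquigarrow \Phi(u_0^+) \rightsquigarrow \Phi(u_0^+) - \Phi(u_0^-) = \Phi(u_0)$, reparametrised to $[0,1]$, checking continuity into $\sob$ at the junction points (the only delicate junctions are at $0$, where continuity of $\tau\mapsto\tau\varphi$ in $\sob$ is clear, and at $\Phi(u_0^+)$, automatic). Strictness of the inequality in $(iii)$ on the open interval: one uses $\Lambda_1 < \Lambda_2$ (Corollary~\ref{cor:FG}, valid since $\Omega$ is $C^{1,1}$) to note that the energy, being $\ge \Lambda_1$ everywhere, equals $\Lambda_2$ only if it hits a value $\ge\Lambda_2$, which the above bounds ($<\Lambda_2$, or $\le 0<\Lambda_2$, or bounded by an endpoint value $<\Lambda_2$) exclude; any non-strict spots can be removed by a small perturbation/reparametrisation, or handled directly since all the bounds obtained are strict. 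The main obstacle I anticipate is the $w \rightsquigarrow \Phi(u_0^+)$ leg: one must ensure that passing ``down through zero and back up'' genuinely stays below $\Lambda_2$ for \emph{all} intermediate scalings, and that the two rays (through $w^{q-1}=\Phi^{-1}(w)$... rather, through $w$ itself, and through $\Phi(u_0^+)$) can be joined at small amplitude by a continuous $\sob$-path at energy $<\Lambda_2$ — here I would simply linearly interpolate two small nonnegative functions $\epsilon w$ and $\epsilon' \Phi(u_0^+)/\|\cdot\|$, whose energy is $O(\epsilon^2)$ hence $<\Lambda_2$ for $\epsilon$ small, using the triangle inequality for $[\cdot]_s$ and coercivity to bound it. This is exactly the structure of the argument in~\cite[Proposition~4.4 / Part of Theorem~4.2]{bravol}, adapted by replacing the local Dirichlet energy with Gagliardo's seminorm and invoking its convexity along linear interpolations in place of the parallelogram identity.
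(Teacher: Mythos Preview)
Your treatment of the second leg $\Phi(u_0^+)\rightsquigarrow\Phi(u_0)$ via linear interpolation is essentially correct (your displayed inequality is a valid upper bound, since $t^q\ge t^2$ on $[0,1]$ for $q<2$, and the endpoint analysis you sketch goes through under each of the two assumptions). The paper argues this leg the same way, only with the exact identity $\lyap{\sigma(\tau)}=h(\tau)+C\tau+K$, $h(\tau)=A\tau^2-B\tau^q$, and a one-variable study of $h$.

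The first leg, however, has a genuine gap. Your plan is to shrink $w$ along the ray $\tau\mapsto\tau w$ down to small amplitude, bridge over to a small multiple of $\Phi(u_0^+)$, and then scale back up; along rays and near zero you estimate the energy by $\max\{0,\lyap{\varphi}\}$ or by $O(\epsilon^2)$. The trouble is that $\Lambda_2\le 0$ \emph{always}: every nonzero critical value is negative (see~\eqref{nehari} with $q<2$) and $0$ is itself a critical value, so $\Lambda_2\in(\Lambda_1,0]$. Hence an energy bound of the form ``$\le 0$'' or ``close to $0$'' is useless; when $\Lambda_2<0$ (the generic situation, e.g.\ whenever a sign-changing critical point exists) your path through small-amplitude functions has energy strictly \emph{above} $\Lambda_2$, and even when $\Lambda_2=0$ the strict inequality fails at the junction. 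Your own caveat (``the main obstacle I anticipate\dots'') identifies the leg but not the actual failure mechanism.

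The paper bypasses this entirely with the hidden-convexity curve you alluded to but did not set up correctly: one takes
\[
\gamma(\tau)=\big[(1-\tau)\,w^q+\tau\,\Phi(u_0^+)^q\big]^{1/q},
\]
for which $\tau\mapsto[\gamma(\tau)]_s^2$ is convex by~\cite[Proposition~4.1]{brafra-14}, while the $L^q$-term is affine in $\tau$. This yields directly
\[
\lyap{\gamma(\tau)}\le(1-\tau)\,\Lambda_1+\tau\,\lyap{\Phi(u_0^+)}<\Lambda_2,
\]
a convex combination of two quantities each strictly below $\Lambda_2$, with no passage near zero energy. Your proposed interpolation $\sigma\mapsto\Phi\big((1-\sigma)w^{q-1}+\sigma u_0^+\big)$ is a different curve and does not enjoy this convexity.
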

\begin{proof}
In order to construct the desired function $\theta$, we first consider a special path $\gamma$ in $\sob$, 
connecting $w$ and the positive part $\Phi(u_0)^+$ of $\Phi(u_0)$. This is done by setting
\[
	\gamma(\tau) = \Big[ (1-\tau) w^q + \tau |\Phi(u_0)^+|^q\Big]^\frac{1}{q}\,, \qquad \text{ for every $\tau\in[0,1]$.}
\]
By~\cite[Proposition 4.1]{brafra-14}, $\tau\mapsto [\gamma(\tau)]_{s,\Omega}^2$ is convex.
In particular, it is continuous and it
follows that $\gamma$ is continuous with values in $\sob$. Also, recalling \eqref{lyap}, we have
\begin{equation}
\label{control1}
\lyap{\gamma(\tau)}\le (1-\tau)\lyap{w}+\tau\lyap{\Phi(u_0)^+}\,, \qquad \text{ for all $\tau\in[0,1]$.}
\end{equation}
Under either of the assumptions \eqref{assnl1} and \eqref{assnl2} that implies
\begin{equation}
\label{control1bis}
\lyap{\gamma(\tau)}< \Lambda_2\,,\qquad \text{for all $\tau\in[0,1]$.}
\end{equation}

Then, we consider the segment in $\sob$ with endpoints $\Phi(u_0)^+$ and $\Phi(u_0)$. The
linear parametrization of such segment, defined by $\sigma(\tau)=\Phi( u_0)^+-\tau \Phi(u_0)^-$ is obviously continuous with values in $\sob$. Also, we have
\[
	\lyap{\sigma(\tau)} = h(\tau) + C\tau+K
\]
where
\[
	C = 2\iint\frac{\Phi(u_0)^+(x)\Phi(u_0)^-(y)}{|x-y|^{N+2s}}\,dx\,dy\,,
	\qquad K 
	=\lyap{\Phi(u_0^+)}\,,
\]
and $h(\tau)$ is essentially the function considered in Appendix to \cite{bravol} in the local case. Namely, 
\[
h(\tau)=A\tau^2-B\tau^q\,,\qquad\text{where } A = \frac12[\Phi(u_0)^-]_{s,\Omega}^2\quad\text{and } B=\frac{\alpha}{q}\int_\Omega |\Phi(u_0)^-|^q\,dx\,.
\]
If $\tau_0:= \left(\frac{qB}{2A}\right)^\frac{1}{2-q}\ge1$, then by direct inspection $h'(\tau)<0$ for $\tau\in(0,1)$, and hence
\begin{equation}
\label{topF}
	\lyap{\sigma(\tau)}\le C+K =\lyap{\Phi(u_0^+)}+
2\iint\frac{\Phi(u_0)^+(x)\Phi(u_0)^-(y)}{|x-y|^{N+2s}}\,dx\,dy\,.
\end{equation}
Otherwise, $qB<2A$ and that implies $h'(1)>0$. Also, for $\tau_0\le \tau\le 1$ we have
$h''\le -2Aq<0$, so that $h'(\tau)\ge h'(1)>0 $ for $\tau\in[\tau_0,1]$. If instead
$0\le \tau<\tau_0$ then $h'(\tau)<0$, because of the definition of $\tau_0$. Therefore,
for all $\tau\in[0,1]$ the inequality
\[
\begin{split}
	\lyap{\sigma(\tau)} &\le \max \{h(0)+C+K\mathbin{,} h(1)+C+K\}
	\\ & =
	\max\left\{	\lyap{\Phi(u_0^+)}+2\iint\frac{\Phi(u_0)^+(x)\Phi(u_0)^-(y)}{|x-y|^{N+2s}}\,dx\,dy	\mathbin{,}	\lyap{\Phi(u_0)}	\right\}
\end{split}
\]
holds regardless of the value of $\tau_0$. Under either of the assumptions in \eqref{assumnonloc}, that entails
\begin{equation}
\label{control2}
\lyap{\sigma(\tau)} < \Lambda_2 \,,\qquad\text{for all $\tau\in[0,1]$.}
\end{equation}

By construction, setting
\[
	\theta(t)= \begin{cases}
		\gamma(2t)\,, & \qquad\text{if $0\le t<\frac12$,}\\
		\sigma(2(t-\tfrac12))\,,&\qquad\text{if $\frac12\le t<1$} 
	\end{cases}
\]
defines a continuous function from $[0,1]$ to $\sob$ for which the assertions $(i)$ and $(ii)$ are true. As for $(iii)$, that
is a consequence of the inequalities \eqref{control1bis} and \eqref{control2}.
\end{proof}

\begin{remark}\label{lims1}If $\varphi\in W^{1,2}_0(\Omega)$ then
\[
\lim_{s\nearrow1}(1-s)\iint\frac{\varphi^+(x)\varphi^-(y)}{|x-y|^{N+2s}}\,dx\,dy=0\,.
\]
That is a consequence of the known fact, see {\em e.g.}~\cite[Corollary 3.20]{EE}, that 
\[
	\lim_{s\nearrow1} (1-s)[\varphi]_s^2 = C(n) \int_\Omega|\nabla\varphi|^2\,dx\,,
\]	
and of the locality of Sobolev seminorm in the right hand side of the latter.
With $\varphi = \Phi(u_0)$, we see that the double integral in \eqref{assnl2} vanishes in the limit up 
to multiplying it by the factor $(1-s)$.
\end{remark}

\section{Proofs of the main results}
\subsection{Proof of Theorem~\ref{mainthm1}}Weak solutions can be defined for \eqref{FPMEintro} similarly as done in Definition~\ref{energyweak} for the
rescaled problem \eqref{FPMEs}. By setting
\[
v(x,t)=e^{\alpha t}u(x,e^t-1)
\]
the desired conclusion becomes that the weak solution $v(\cdot,t)$ of \eqref{FPMEs} with $v(0)=u_0$ converges, as $t\to+\infty$, either to $\Phi^{-1}(w)$ or to $-\Phi^{-1}(w)$ in $L^{m+1}(\Omega)$,
where $w$ is the positive minimiser of the functional 
$\lyapno$  defined by \eqref{lyap}, that is unique by Lemma~\ref{lm:basic-elliptic}.
By Theorem~\ref{exist-ima}, $v$ is uniquely determined and the estimate \eqref{EED} holds. Therefore, by the compactness of the embedding
$\sob$ into $L^{q}(\Omega)$, it follows that the orbit $\{v(\cdot,t)\colon t>0\}$
is precompact in $L^{m+1}(\Omega)$. Then, the omega-limit $\omega(u_0)$ is connected,
and so in order to get the desired conclusion
it suffices to prove that
\begin{equation}
\label{fin}
\omega(u_0)\subseteq\{\Phi^{-1}(w),-\Phi^{-1}(w)\}\,.
\end{equation}

Then, we take $U\in\omega(u_0)$.
From \eqref{EED} 
we can infer \eqref{hp-omega} and so, by Theorem~\ref{omega-lim-char},  $\Phi(U)$ is 
a critical point of 
$\lyapno$.
Therefore, it is enough to make sure that
\begin{equation}
\label{fineq}
\lyap{\Phi(U)}<\Lambda_{2}
\end{equation}
because by Corollary~\ref{cor:FG} that entails
that $\Phi(U)$ is either $w$ or $-w$, which in turn gives \eqref{fin}.

We are left to prove \eqref{fineq}. To do so, we take a sequence $t_j\nearrow+\infty$ with $v(\cdot,t_j)\to U$ in $L^{m+1}(\Omega)$. We raise
the Lipschitz estimate $|\Phi(b)-\Phi(a)|\le m (|a|\vee|b|)^{m-1} |b-a|$, with
$a=U(x)$ and $b=v(x,t_j)$, to the power $q=\frac{m+1}{m}$, we integrate the result over $\Omega$, and hence we arrive at
\[
\begin{split}
	\limsup_{j\to\infty} \|\Phi(v(\cdot,t_j)) & -\Phi(U)\|_{L^q(\Omega)}
	\le m\Big[\|\Phi(U)\|_{L^q(\Omega)}+\sup_{t>0} \|v(\cdot,t)\|_{L^{m+1}(\Omega)}\Big] \lim_{j\to\infty}\| v(\cdot,t_j)-U\|_{L^{m+1}(\Omega)}\,,
\end{split}
\]	
where we also used H\"older inequality with exponents $m/(m-1)$ and $m$.
Hence, $\Phi(v(\cdot,t_j))$ converges to $\Phi(U)$ 
in measure. 
Then, by Fatou's Lemma,
\[
 \lyap{\Phi(U)}\le\liminf_{j\to\infty} \lyap{\Phi(v(\cdot,t_j))}\,.
\]
On the other hand, by Theorem~\ref{exist-ima}, 
\[
\lyap{\Phi(v(\cdot,t))}
\le \lyap{\Phi(u_0)}	
\,,\qquad
\text{for all $t>0$.}
\]
By assumption, we have
\[
 \lyap{\Phi(u_0)} <\Lambda_{2}
\]
and \eqref{fineq} follows by pairing this strict inequality with the previous two ones.
\qed
\begin{remark}
It would be interesting to upgrade the $L^{m+1}$ convergence in \eqref{mainthm1} to the \emph{uniform} (resp., {\em local uniform}) convergence. Once $C^{\alpha}$ regularity up to the boundary (resp., interior $C^\alpha$ regularity) is available, which is still not the case for the sign changing solutions, it would be sufficient to reproduce the argument of \cite[Chapter 20, page 526]{VaBook}.
\end{remark}

\subsection{Proof of Proposition~\ref{prop:select}} 
Assume that one of the two conditions in \eqref{assumnonloc} holds and set $q=(m+1)/m$. Then, by Proposition~\ref{prop:encontr}
there exists $\theta\in C([0,1];\sob)$ such that $\theta(\cdot, 0)$ equals
 the positive minimizer  $w$ of $\lyapno$, moreover $\theta(\cdot,1)=\Phi(u_0)$ and 
\begin{equation}
\label{propcntrl1}
 \lyap{\Phi(\theta(\cdot,t))}<\Lambda_2 \,, \qquad\text{for all $t\in[0,1]$.}
\end{equation}
 Now, we set
\[
	z(\cdot,t)= \begin{cases}
		\theta(\cdot,t)\,, & \qquad \text{if $0\le t\le1$}\\
		\Phi(v(\cdot,t-1))\,,& \qquad \text{if $t>1$,}
	\end{cases}
\]
where $v$ is the unique solution of \eqref{FPMEs} with $v(0)=u_0$. Then, in view of Theorem~\ref{exist-ima}
and of Proposition~\ref{prop:encontr}, we have
\begin{equation}
\label{propctrl2}
	\lyap{z(\cdot,t)} <\Lambda_{2}
\end{equation}

Hence, by coercivity (see Lemma~\ref{lm:coerc}) we deduce that the trajectory $z(\cdot,t)$ is contained in a bounded
subset of $\sob$.
Since $t\mapsto z(\cdot,t)$ is continuous 
from $[0,1]$ to $\sob$,  by the compactess of the embedding into $L^q(\Omega)$ it is continuous as a function with values in $L^q(\Omega)$, as well; also, it is easily seen that the continuity of $t\mapsto v(\cdot,t-1)$ from $[1,+\infty)$ to $L^{m+1}(\Omega)$
implies that of $t\mapsto z(\cdot,t)= \Phi(v(\cdot,t-1))$ from $[1,+\infty)$ to $L^q(\Omega)$. Therefore,
$z$ belongs both to $L^\infty((0,+\infty);\sob)$ and to $C([0,+\infty);L^0(\Omega))$.

Now, we argue by contradiction, and we assume $v(\cdot,t)$ to converge in measure to $-w$.
Then, so does $z(\cdot,t)$ and it follows that $z$ is then eligible for the mountain pass formula
of Proposition~\ref{prop:mszeta}. Thus, 
\[
\Lambda^\ast=\inf_{z\in\mathfrak{Z}}\sup_{t\in[0,+\infty)} 
	\lyap{z(\cdot,t)}<\Lambda_2\,.
\]
in contradiction with Proposition~\ref{prop:MPexcited}.\qed

\appendix
\section{An elementary inequality}

\begin{lemma}\label{lm:A2}
We set $f(t)=\tfrac{1}{m+1}|t|^{m+1}$, $g(t)=|t|^{\frac{m-1}{2}}t$, and we let $a,b\in\mathbb R$. Then
\[
	f(a)-f(b)-f'(b)(a-b) \ge C_0(m) |g(b)-g(a)|^2\,,
\]
where $C_0(m)=(m+1)^{-3}$.
\end{lemma}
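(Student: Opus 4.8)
The inequality is scaling-invariant: replacing $(a,b)$ by $(\lambda a,\lambda b)$ multiplies both sides by $|\lambda|^{m+1}$, since $f$ is $(m{+}1)$-homogeneous and $g$ is $\tfrac{m+1}{2}$-homogeneous so $|g(b)-g(a)|^2$ is $(m{+}1)$-homogeneous as well. Hence I may normalize, say, $|a|\vee|b| = 1$, or even reduce to $b = 1$ (treating the case $b = 0$ separately, where the claim reads $\tfrac{1}{m+1}|a|^{m+1} \ge C_0(m)|a|^{m+1}$, true since $(m+1)^{-3} \le (m+1)^{-1}$). After this reduction the statement becomes a one-variable inequality $\psi(a) := f(a) - f(1) - (a-1) \ge C_0(m)\,|g(a) - 1|^2$ for all $a \in \mathbb R$, where $f(1) = \tfrac{1}{m+1}$, $f'(1) = 1$, $g(1) = 1$.

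\textbf{Key steps.} First I would record the convexity input: since $f$ is convex (indeed strictly convex away from $0$), the left-hand side $\psi(a) = f(a)-f(1)-f'(1)(a-1)$ is the Bregman-type remainder, which is nonnegative and vanishes only at $a=1$; likewise the right-hand side vanishes only at $a=1$. So one needs a quantitative comparison near $a=1$ and control of the growth for $|a|$ large. A clean way is to write both sides as integrals. One has the identity
\[
	f(a) - f(b) - f'(b)(a-b) = \int_b^a \big(f'(\xi) - f'(b)\big)\,d\xi = \int_b^a\!\!\int_b^\xi f''(\eta)\,d\eta\,d\xi,
\]
with $f''(\eta) = m|\eta|^{m-1}$, and similarly $g(b) - g(a) = \int_a^b g'(\xi)\,d\xi$ with $g'(\xi) = \tfrac{m+1}{2}|\xi|^{\frac{m-1}{2}}$. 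Then by Cauchy--Schwarz,
\[
	|g(b) - g(a)|^2 = \left(\int_a^b g'(\xi)\,d\xi\right)^{\!2} \le |a - b|\int_a^b |g'(\xi)|^2\,d\xi = |a-b|\,\frac{(m+1)^2}{4}\int_a^b |\xi|^{m-1}\,d\xi,
\]
so it suffices to compare $\int_b^a(f'(\xi) - f'(b))\,d\xi$ with $\tfrac{(m+1)^2}{4}\,|a-b|\int_a^b|\xi|^{m-1}\,d\xi$ up to the constant $C_0(m)$. The remaining task is the pointwise/averaged estimate: show that the average of $f'(\xi) - f'(b) = m(|\xi|^{m-1}\mathrm{sgn}\,\xi - |b|^{m-1}\mathrm{sgn}\,b)$ over $[b,a]$ dominates a fixed multiple of $|a-b|$ times the average of $|\xi|^{m-1}$ over the same interval. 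This is where one splits cases according to whether $a,b$ have the same sign or opposite signs, and in the opposite-sign case the term $|b|^{m-1}\mathrm{sgn}\,b$ actually helps rather than hurts.

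\textbf{Main obstacle.} The delicate point is the same-sign case, say $0 < b < a$: there $f'(\xi) - f'(b) = m(\xi^{m-1} - b^{m-1})$ is small when $\xi$ is close to $b$, so the naive bound loses. The right move is to integrate once more and use that for $0 < b \le \xi \le a$ one has $\xi^{m-1} - b^{m-1} \ge c(m)\,\xi^{m-2}(\xi - b)$ or, more robustly, to compare directly via the substitution reducing everything to the inequality $\int_0^1 (b + t(a-b))^{m-1}\,dt$ versus $\int_0^1\!\!\int_0^s (\cdots)$; the cleanest route, and presumably the one the author has in mind (this is the inequality attributed to Brasco in the acknowledgments), is to exploit the explicit relation $g'(\xi)^2 = \tfrac{(m+1)^2}{4m}\,f''(\xi)$, so that $|g(b)-g(a)|^2 \le \tfrac{(m+1)^2}{4m}|a-b|\int_a^b f''(\xi)\,d\xi$, and then to prove the purely convex-analytic statement that $\int_b^a(f'(\xi)-f'(b))\,d\xi \ge \tfrac{1}{(m+1)}\cdot\tfrac{1}{|a-b|}\cdot\tfrac{|a-b|}{(m+1)^2}\cdot\tfrac{4m}{(m+1)^2}\cdot\frac{(m+1)^2}{4m}\int_a^b f''$ — i.e. that the double integral $\int_b^a\!\!\int_b^\xi f'' $ is at least $\tfrac{1}{m+1}$ of $|a-b|\int_b^a f''$. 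Since $f''(\eta)=m|\eta|^{m-1}$ is monotone on each half-line, this last comparison of a double integral against a product of single integrals follows from Chebyshev's sum/integral inequality for monotone functions (or a direct Fubini computation), yielding the constant $(m+1)^{-1}$, and combined with the $\tfrac{(m+1)^2}{4m}$ and the $\tfrac{(m+1)^2}{4m}$ losses one arrives at $C_0(m) = (m+1)^{-3}$. I would carry out this Chebyshev/Fubini comparison explicitly and then reassemble the constants, with the sign-handling (opposite-sign case strictly easier) dispatched in a short remark.
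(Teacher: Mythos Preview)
Your overall strategy—use Cauchy--Schwarz to bound $|g(b)-g(a)|^2 \le \tfrac{(m+1)^2}{4m}\,|a-b|\int_b^a f''$, then compare with the Taylor remainder $\int_b^a(a-\xi)f''(\xi)\,d\xi$—is sound in principle, but the key step fails as written. In the critical case $0<b<a$ (which you correctly flag as the obstacle) you need a \emph{lower} bound on the ratio
\[
\frac{\int_b^a (a-\xi)f''(\xi)\,d\xi}{(a-b)\int_b^a f''(\xi)\,d\xi}
= \frac{\int_0^1(1-t)(s+t)^{m-1}\,dt}{\int_0^1(s+t)^{m-1}\,dt},\qquad s=\tfrac{b}{a-b}\ge0,
\]
and Chebyshev's integral inequality goes the \emph{wrong} way here: since $t\mapsto 1-t$ is decreasing while $t\mapsto(s+t)^{m-1}$ is increasing, Chebyshev yields only the upper bound $1/2$, not a lower bound. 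Indeed, for a generic monotone weight in place of $(s+t)^{m-1}$ no uniform positive lower bound exists (let the weight concentrate near $t=1$, where $1-t$ vanishes), so monotonicity alone cannot suffice. The bound $1/(m+1)$ you assert \emph{is} correct for this specific weight—an integration by parts reduces it to $(s+1)^m\ge s^m+ms^{m-1}$, i.e.\ the tangent-line inequality for $x\mapsto x^m$—but that is a direct computation exploiting the power structure of $f''$, not a Chebyshev/Fubini consequence; the mixed-sign case would need its own verification. Your constant bookkeeping is also off: with the ratio bound $1/(m+1)$ and the single Cauchy--Schwarz loss $\tfrac{(m+1)^2}{4m}$ one obtains $C_0=\tfrac{4m}{(m+1)^3}$, strictly better than $(m+1)^{-3}$, so the displayed chain of fractions does not reflect the actual argument.

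For comparison, the paper's route is different: it proves an improved midpoint-convexity estimate
\[
\tfrac12|a|^{m+1}+\tfrac12|b|^{m+1}\ge \Big|\tfrac{a+b}{2}\Big|^{m+1}+\tfrac18\max\{|a|^{m-1},|b|^{m-1}\}\,|a-b|^2
\]
by Taylor-expanding $f$ about $\tfrac{a+b}{2}$ with integral remainder, dropping the nonnegative remainder on the side of the smaller endpoint, and bounding the remaining one below via the triangle inequality and a Beta-integral; this is then paired with the tangent inequality at $b$ and with the mean-value bound $|g(a)-g(b)|^2\le \tfrac{(m+1)^2}{4}\max\{|a|^{m-1},|b|^{m-1}\}|a-b|^2$. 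That approach avoids the ratio inequality altogether.
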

\begin{proof}
We claim that
\begin{equation}
\label{proofA.1}
\frac{1}{2}|a|^{m+1} + \frac{1}{2}|b|^{m+1}\ge \left\vert\frac{a+b}{2}\right\vert^{m+1} + \frac{1}{8}\max\big\{|a|^{m-1}\mathbin{,}|b|^{m-1}
\big\} |a-b|^2\,.
\end{equation}
Thence, since - by strict convexity - we also have
\[
\left\vert\frac{a+b}{2}\right\vert^{m+1}\ge |b|^{m+1}+(m+1)|b|^{m-1}b\ \frac{a-b}{2}\,,
\]
we would arrive at
\[
	|a|^{m+1}\ge |b|^{m+1}+ (m+1)|b|^{m-1}b\ (a-b) +\frac14\max\big\{|a|^{m-1}\mathbin{,}|b|^{m-1}
\big\} |a-b|^2\,.
\]
By Lagrange mean value theorem applied to the function $g(v)=|v|^\frac{m-1}{2}v$, we also have
\[
|g(a)-g(b)|^2 \le \frac{(m+1)^2}{4}\max\big\{|a|^{m-1}\mathbin{,}|b|^{m-1}
\big\} |a-b|^2\,,
\]
and because of the definition of $f$ we would get the conclusion by combining the last two inequalities.

Then, we are left to prove \eqref{proofA.1}. To do so, 
using Cauchy integral remainder theorem 
we write
\[
\begin{split}
f(a) & = f\left(\tfrac{a+b}{2}\right)+\tfrac12f'\left( \tfrac{a+b}{2}\right)(a-b)+\tfrac14\int_0^1f''\left(
\lambda a +(1-\lambda)\tfrac{a+b}{2}\right)(a-b)^2(1-\lambda)\,d\lambda\,,\\
f(b) & = f\left(\tfrac{a+b}{2}\right)+\tfrac12f'\left( \tfrac{a+b}{2}\right)(a-b)+\tfrac14\int_0^1f''\left(
\lambda b +(1-\lambda)\tfrac{a+b}{2}\right)(b-a)^2(1-\lambda)\,d\lambda\,.
\end{split}
\]
Since $f''(t)=m|t|^{m-1}$, if follows that
\[
\begin{split}
\frac12 f(a)+\frac12 f(b) \ge f\left(\tfrac{a+b}{2}\right) &+\frac{m}{8}(a-b)^2
\int_0^1\left\vert \lambda a + (1-\lambda)\tfrac{a+b}{2}\right\vert^{m-1}(1-\lambda)\,d\lambda\\
& + \frac{m}{8}(a-b)^2\int_0^1\left\vert \lambda b + (1-\lambda)\tfrac{a+b}{2}\right\vert^{m-1}(1-\lambda)\,d\lambda\,.
\end{split}
\]
We assume with no restriction that $|a|\ge |b|$. Hence, by the triangle inequality we see that
\[
\begin{split}
\int_0^1\left\vert \lambda a + (1-\lambda)\tfrac{a+b}{2}\right\vert^{m-1}(1-\lambda)\,d\lambda
& = \int_0^1\left\vert \tfrac{1+\lambda}{2}a +\tfrac{1-\lambda}{2}b\right\vert^{m-1}(1-\lambda)\,d\lambda\\
& \ge \int_0^1\left( \tfrac{1+\lambda}{2}a -\tfrac{1-\lambda}{2}b\right)^{m-1}(1-\lambda)\,d\lambda\\
& \ge |a|^{m-1}\int_0^1\lambda^{m-1}(1-\lambda)\,d\lambda =\frac{|a|^{m-1} }{m(m+1)}\,,
\end{split}
\]
and $|a|^{m+1}=\max\{|a|^{m-1},|b|^{m-1}\}$ by assumption. Then, by inserting the latter in the previous inequality
we get \eqref{proofA.1}, as desired.
\end{proof}


\begin{thebibliography}{99}

\bibitem{AP} Aronson, D.G.; Peletier,  L.A.: 
Large time behaviour of solutions of the porous medium equation in bounded domains,  J. Differential Equations, {\bf 39} (1981), 378--412.

\bibitem{BFRO}
Bonforte, M.; Figalli, A.; Ros-Oton, X.:
Infinite Speed of Propagation and Regularity of Solutions to the Fractional Porous Medium Equation in General Domains,
2017, 70(8), pp.1472--1508.

\bibitem{BFV}
Bonforte, M.; Figalli, A.; V\'azquez,  J.L.: 
Sharp global estimates for local and nonlocal porous medium-type equations in bounded domains. 
Anal. PDE, 2018, 11 (11), pp. 945--982.

\bibitem{BSV}
Bonforte, M.; Sire, Y.; V\'azquez, J. L.: 
Existence, Uniqueness and Asymptotic behaviour for fractional porous
medium on bounded domains. Discr. Cont. Dyn. Sys., 2015, 35, pp. 5725--5767.

\bibitem{BV}
Bonforte, M.; V\'azquez,  J. L.: 
A Priori Estimates for Fractional Nonlinear Degenerate Diffusion Equations
on bounded domains, Arch. Rat. Mech. Anal., 2015, 218(1), pp. 317--362.

\bibitem{BV2}
Bonforte, M.; V\'azquez,  J. L.: Fractional nonlinear degenerate diffusion equations on bounded domains,
I: Existence, uniqueness and upper bounds, Nonlinear Anal., 2016, 131, pp. 363--398.

\bibitem{BCV}
Brasco, L.; Castro, D.L.; Vazquez, J.L.:
Characterisation of homogeneous fractional Sobolev spaces
Calculus of Variations and Partial Differential Equations, 2012, 60(60).

\bibitem{brafra-14} 
Brasco, L., Franzina, G.:
Convexity properties of dirichlet integrals and picone-type inequalities
Kodai Mathematical Journal, 2014, 37(3), pp. 769--799.

\bibitem{brafra-20} 
Brasco, L., Franzina, G.:
An overview on constrained critical points of dirichlet integrals
Rendiconti del Seminario Matematico, 2020, 78(2), pp. 7--50.

\bibitem{bradepfra} 
Brasco, L., De Philippis, G., Franzina, G.:
Positive solutions to the sublinear Lane-Emden equation are isolated.
Communications in Partial Differential Equations, 2021, 46(10), pp. 1940--1972.

\bibitem{BLP} 
Brasco, L., Lindgren, E., Parini, E.:
The fractional Cheeger problem
Interfaces and Free Boundaries. 2014, 16(3), pp. 419--458.

\bibitem{bravol} 
Brasco, L., Volzone, B.:
Long-time behavior for the porous medium equation with small initial energy
Advances in Mathematics, 2022, 394, 108029.

\bibitem{DQRV} 
 De Pablo, A., Quirós, F., Rodríguez, A., Vázquez, J.L.:
 A General Fractional Porous Medium Equation.
 Communications on Pure and Applied Mathematics, 2012, 65(9), pp. 1242--1284.
 
 
 \bibitem{DK} 
 Dahlberg, B, Kenig, C. :
 Nonnegative solutions of the initial-Dirichlet problem for generalized porous medium equations in cylinders, Journal of the American Mathematical Society, 1988, 2, pp. 401-412 .
 
 
 \bibitem{EE}
Edmunds,  D. E.; Evans, W. D.: 
{\em Fractional Sobolev spaces and inequalities.} Cambridge Tracts in Mathematics,
230. Cambridge University Press, Cambridge, 2023.

\bibitem{fralic}
Franzina, G., Licheri, D.:
 A non-local semilinear eigenvalue problem
Fractional Calculus and Applied Analysis, 2022, 25(6), pp. 2193–222.

\bibitem{serval} Servadei, R., Valdinoci, E.: Weak and viscosity solutions of the fractional Laplace equation. Publ. Mat. 58(1),
133–154 (2014).

\bibitem{Si} Simon, J.: Compact sets in the space $L^p(0,T;B)$,
Ann. Mat. Pura Appl. (4), {\bf 146} (1987), 65--96.

\bibitem{S} Struwe, M.:
{\em Variational Methods. Applications to Nonlinear Partial Differential Equations and Hamiltonian Systems.}
Ergebnisse der Mathematik und ihrer Grenzgebiete. 3. Folge, A Series of Modern Surveys in Mathematics, Springer Berlin, Heidelberg 2008.

\bibitem{VaBook} V\'azquez, J. L.:  {\it The porous medium equation. Mathematical theory.} Oxford Mathematical Monographs. The Clarendon Press, Oxford University Press, Oxford, 2007.

\end{thebibliography}
\end{document}